\documentclass[]{amsart}  

\usepackage[T1]{fontenc}
\usepackage{Nyquist_sty}
\usepackage{xcolor}
\usepackage{subcaption}

\usepackage{mathtools}
\usepackage{bbm}
\usepackage{booktabs}
\usepackage{lmodern}
\usepackage[disable]{todonotes}
\usepackage[unicode]{hyperref}
\usepackage[normalem]{ulem}

\PassOptionsToPackage{unicode}{hyperref}

\usepackage{tikz}
\usetikzlibrary{tikzmark, shapes.geometric, shapes.arrows, fit, bending}

\numberwithin{equation}{section}

\newcommand{\close}[1]{\overline{#1}}
\newcommand{\interior}[1]{#1^\mathrm{o}}
\newcommand\restr[2]{{%
  \left.\kern-\nulldelimiterspace %
  #1 %
  \vphantom{\big|} %
  \right|_{#2} %
  }}
\newcommand{\spt}{\operatorname{spt}}

\makeatletter
\newcommand*\bigcdot{\mathpalette\bigcdot@{.5}}
\newcommand*\bigcdot@[2]{\mathbin{\vcenter{\hbox{\scalebox{#2}{$\m@th#1\bullet$}}}}}
\makeatother
\renewcommand{\cdot}{\bigcdot}

\newenvironment{taggedassumption}[1]
 {\taggedassumptionx}
 {\endtaggedassumptionx}

\begin{document}   

\title[Large deviations of the dynamic Schr\"odinger problem]{A weak convergence approach to the large deviations of the dynamic Schr\"odinger problem} 

\author[V.~Nilsson]{Viktor Nilsson}
\author[P.~Nyquist]{Pierre Nyquist}
\address[V.~Nilsson, P.\ Nyquist]{KTH Royal Institute of Technology}
\address[P.\ Nyquist]{Chalmers University of Technology and University of Gothenburg}
\email{{vikn@kth.se}, {pnyquist@chalmers.se}}
\thanks{}

\subjclass[2020]{60F10; secondary 49Q22, 93C10}%

\keywords{Large deviations, Schr\"{o}dinger bridges, entropic optimal transport, generative modeling}

\begin{abstract} 
In this paper, we consider the large deviations for dynamical Schr\"odinger problems, using the variational approach developed by Dupuis, Ellis, Budhiraja, and others.
Recent results on scaled families of Schr\"odinger problems, in particular by Bernton, Ghosal, and Nutz \cite{bernton2022entropic}, and the authors \cite{nilsson2025large}, have established large deviation principles for the static problem. 
For the dynamic problem, only the case with a scaled Brownian motion reference process has been explored by Kato in \cite{Kato24}. %

Here, we derive large deviations results using the variational approach, with the aim of going beyond the Brownian reference dynamics considered in \cite{Kato24}. %
Specifically, we develop a uniform Laplace principle for bridge processes conditioned on their endpoints. 
When combined with existing results for the static problem, this leads to a large deviation principle for the corresponding (dynamic) Schr\"odinger bridge. 
In addition to the specific results of the paper, our work puts such large deviation questions into the weak convergence framework, and we conjecture that the results can be extended to cover also more involved types of reference dynamics. %
Specifically, we provide an outlook on applying the result to reflected Schr\"odinger bridges.

\end{abstract}   

\maketitle

\section{Introduction}
\label{sec:intro}
In recent years, optimal transport (OT) has gained increasing importance, both within mathematics and in applications, primarily in the data science and machine learning contexts; see for example \cite{peyre2019, nutz2021introduction} and the references therein. 
In applications, (almost) solving OT problems has become tractable in more and more situations due to algorithmic advances, such as Sinkhorn's algorithm for discrete cases, and diffusion Schr\"odinger bridges in the continuous case. These methods rely on an entropic regularization of the OT problem, resulting in an \emph{entropic optimal transport} (EOT) problem. From a different perspective, these problems may also be seen as static \emph{Schr\"odinger bridges} (SB).

In order to understand of how close EOT/SB plans are to the corresponding (i.e., with no regularization) OT plan, we must understand the impact of the regularization parameter and how the plans converge as this parameter decreases to zero. One way of obtaining such an understanding is to establish rates of convergence of the EOT/SB plans to the OT plan(s).
This has recently been done by applying the framework of \emph{large deviations}, first in \cite{bernton2022entropic} for the EOT problem with a fixed cost function, and then extended to a class of SB problems in \cite{nilsson2025large}. In the latter, \emph{large deviation principles} (LDP) are developed for \emph{static} SBs, and the corresponding EOT plans, under certain conditions on the reference dynamics, or equivalently the associated cost functions (see \cite{nilsson2025large} for the details). In \cite{Kato24}, the large deviation results of \cite{bernton2022entropic} are extended to \emph{dynamical} SBs for the special case of Brownian reference dynamics; recently, Kato also extended his own results to cover Langevin dynamics when the associated potential is bounded and has bounded derivatives (see Proposition 3.3 in \cite{Kato24}) by applying an argument similar to the more general approach (for static problems) of \cite{nilsson2025large}. 

The main result of this paper can be viewed as an extension of the results in \cite{Kato24} to more general reference dynamics, as well as recasting the underlying question of large deviatons for Schr\"odinger problems in the framework of the \emph{weak convergence approach} of Dupuis, Ellis, Budhiraja and others \cite{Dupuis97, BD19}.

The remainder of the paper is organized as follows. In Section \ref{sec:main} we outline the main results, specifically the uniform Laplace principle for bridge processes (Theorem \ref{thm:uniform-laplace-principle}) and the LDP for dynamic Schr\"odinger bridges (Theorem \ref{thm:LDP_dSB}). In Section \ref{sec:prelim} we provide some preliminaries on Schr\"odinger bridge problems and large deviations. Section \ref{sec:development} is aimed at deriving the uniform Laplace principle for the type of reference SDE dynamics considered here. In Section \ref{sec:fullLDP} we derive the LDP for the relevant dynamic Schr\"odinger bridges, proving Theorem \ref{thm:LDP_dSB}. The paper concludes with somee brief examples in Section \ref{sec:case}.

\subsection{Main result}
\label{sec:main}

Before going into the details, we here outline the main results of the paper.
In the dynamic SB Problem, one starts with a \emph{reference measure} $R \in \calP(\calC_1)$, the space of probability measures on $\calC_1 \coloneq C([0,1]: \bR^d)$, describing how a typical particle travels; we will focus on reference dynamics $R$ associated with a class of SDEs (see below). %
The \emph{dynamic Schr\"{o}dinger bridge} with respect to $R$, $\mu$, and $\nu$ is defined as 
\begin{equation}\label{eq:dynamic-SB-def}
    \hat{\pi} \coloneq \argmin_{\pi \in \Pi^{\calC_1}(\mu, \nu)} \calH(\pi \mid\mid R),
\end{equation}
where $\calH(\cdot \mid\mid \cdot)$ is the relative entropy, or Kullback-Liebler (KL) divergence, and we define the set of path space couplings $\Pi^{\calC_1}(\mu, \nu)$ by
\begin{equation*}
    \Pi^{\calC_1} (\mu, \nu) \coloneq \left\{ \pi \in \calP (\calC _1): \pi_0 = \mu, \ \pi_1 = \nu \right\}.
\end{equation*}
By the strict convexity property of $\calH (\cdot \mid\mid R)$, $\hat \pi$ being well-defined follows from assuming the existence of any coupling $\pi \in \Pi^{\calC_1} (\mu, \nu)$ such that $\calH (\pi \mid\mid R) < \infty$. We tacitly assume that all considered SBs satisfy this condition, although there are useful generalizations like cyclical invariance, see, e.g., \cite{bernton2022entropic, nilsson2025large}.

Throughout we focus on establishing \textit{full} LDPs for dynamic SBs, which requires that this is also true for the underlying static problem. That is, the sequence of solutions $\{\pi_{\eta, 01}\} _\eta$ in the static problem satisfies a full LDP (see Section \ref{sec:prelim} for the notation). The simplest way to obtain this is to assume that the supports of initial and final time distributions $\mu$ and $\nu$ are compact \cite{bernton2022entropic, Kato24, nilsson2025large}.
\begin{assumption}
    The two marginals $\mu$, $\nu$ in the dynamic Schr\"odinger problem \eqref{eq:dynamic-SB-def} have compact supports.
\end{assumption}

As already mentioned, in this paper we focus on reference dynamics given by an SDE of the form 
\begin{equation}\label{eq:R-SDE-main-results}
\begin{split}
    dX^\eta_t &= f(t, X_t) dt + \sqrt{\eta}\sigma(t, X_t) dW_t, \ \ t \in [0, 1], \\ 
    X_0 &\sim \mu,
\end{split}
\end{equation}
whose path measure on $\calC_1 \coloneq C([0,1]: \bR^d)$ we denote by $R_\eta$. Similarly, the associated Schr\"odinger bridge is denoted by $\pi_\eta$.
The full set of regularity assumptions we impose on these dynamics are given in Section \ref{sec:development}, as Assumptions \ref{ass:b-sigma-local-properties}-\ref{ass:log-gradient-convergence}.
The first result, which gives us the necessary bridge from an LDP for the static problem to the dynamic setting, concerns bridge processes associated with the reference measure $R _\eta$.

\begin{theorem}[Uniform Laplace principle of bridges]
\label{thm:uniform-laplace-principle}
    Under Assumptions \ref{ass:b-sigma-local-properties}--\ref{ass:log-gradient-convergence}, for any compact set $K \Subset D$,
    \begin{equation}
        \lim_{\eta \downarrow 0} \sup_{x,y \in K} \left|-\eta \log \bE\left[\exp - \frac{1}{\eta} F(X^{\eta, xy}) \right] - \inf_{\varphi \in \calC_1} \left[F(\varphi) + I_B^{xy}(\varphi)\right]\right| = 0,
    \end{equation}
    where $X^{\eta, xy}$ is given by the dynamics \ref{eq:R-SDE-main-results}, conditioned on starting in $x$ at $t=0$ and ending in $y$ at $t=1$.
    The good rate function $I_B$ is given by
    \[ I_B(\varphi) = \inf_{\nu \in U^{xy}_\varphi} \frac{1}{2} \int_0^1 |\nu(t)|^2 dt,\]
    where $U^{xy}_\varphi \coloneq \{\nu \in L^2([0,1]:\bR^d): \varphi(\cdot) = \int_0^{\cdot} (b(s, \varphi(s) + \sigma(s, \varphi(s))\nu_s)ds\}$.
\end{theorem}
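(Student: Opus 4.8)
The plan is to realise the bridge $X^{\eta,xy}$ through Doob's $h$-transform and then run the weak convergence argument of \cite{BD19}. Writing $p_\eta(t,z;1,y)$ for the transition density of \eqref{eq:R-SDE-main-results}, the conditioned process $X^{\eta,xy}$ solves, on $[0,1)$,
\[
dX_t = \Bigl( f(t,X_t) + \eta\,(\sigma\sigma^\top)(t,X_t)\,\nabla_z\log p_\eta(t,X_t;1,y) \Bigr)\,dt + \sqrt{\eta}\,\sigma(t,X_t)\,dW_t, \qquad X_0 = x,
\]
and is pinned to $X_1 = y$. Applying the Bou\'e--Dupuis / Budhiraja--Dupuis variational representation to this SDE gives, for bounded continuous $F$,
\[
-\eta\log\bE\Bigl[ \exp\bigl( -\tfrac{1}{\eta} F(X^{\eta,xy}) \bigr) \Bigr] = \inf_{u}\ \bE\Bigl[ \tfrac{1}{2}\int_0^1 |u_t|^2\,dt + F(\bar X^{\eta,u,xy}) \Bigr],
\]
where $u$ runs over progressively measurable $L^2([0,1]:\bR^d)$-valued controls and $\bar X^{\eta,u,xy}$ solves the displayed SDE with the extra drift $\sigma(t,\bar X_t)u_t$. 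Assumption \ref{ass:log-gradient-convergence} is what makes the $\eta\downarrow 0$ analysis of this problem possible: it gives, locally uniformly, the heat-kernel asymptotics $-\eta\log p_\eta(t,z;1,y)\to \psi(t,z;y)$ and $\eta\nabla_z\log p_\eta(t,z;1,y)\to -\nabla_z\psi(t,z;y)$, where $\psi(t,\cdot;y)$ is the Freidlin--Wentzell cost of steering the noiseless system to $(1,y)$. In particular $c(x,y):=\psi(0,x;y)$ is the constant by which the bare bridge action $\inf_{\nu\in U^{xy}_\varphi}\tfrac{1}{2}\int_0^1|\nu_t|^2\,dt$ must be shifted so that $\inf_{\varphi\in\calC_1}I_B^{xy}(\varphi)=0$, as it must be, since $\bE[1]=1$.

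For the Laplace lower bound I would take, for each $\eta$, an $\eta$-optimal control $u^\eta$ in the representation, so that $\sup_\eta \tfrac{1}{2}\bE\int_0^1|u^\eta_t|^2\,dt<\infty$. Then $\{u^\eta\}$ is tight as an $L^2$-valued family in the weak topology, and on each subinterval $[0,1-\delta]$ the coefficients of \eqref{eq:R-SDE-main-results} and the transform drift are bounded and Lipschitz uniformly in $\eta$ by Assumptions \ref{ass:b-sigma-local-properties}--\ref{ass:log-gradient-convergence}. A Gr\"onwall estimate together with the vanishing of the stochastic integral then gives, along a subsequence, $(\bar X^{\eta,u^\eta,xy},u^\eta)\Rightarrow(\bar\varphi,u)$ with $\dot{\bar\varphi}_t = f(t,\bar\varphi_t) - (\sigma\sigma^\top)(t,\bar\varphi_t)\nabla_z\psi(t,\bar\varphi_t;y) + \sigma(t,\bar\varphi_t)u_t$ on $[0,1-\delta]$ and $\bar\varphi_0 = x$; letting $\delta\downarrow 0$, the singular term $-\sigma\sigma^\top\nabla_z\psi$ pins $\bar\varphi_1 = y$. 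Substituting $\nu_t := u_t - \sigma(t,\bar\varphi_t)^\top\nabla_z\psi(t,\bar\varphi_t;y)$ turns the limit dynamics into $\dot{\bar\varphi}_t = f(t,\bar\varphi_t) + \sigma(t,\bar\varphi_t)\nu_t$ with $\nu\in U^{xy}_{\bar\varphi}$, while the Hamilton--Jacobi equation for $\psi$ telescopes the action: $\tfrac{1}{2}\int_0^1|u_t|^2\,dt = \tfrac{1}{2}\int_0^1|\nu_t|^2\,dt - c(x,y)$. Fatou and lower semicontinuity of $\nu\mapsto\tfrac{1}{2}\int|\nu|^2$ then give $\liminf_{\eta\downarrow 0}\bigl(-\eta\log\bE[\,\cdots\,]\bigr)\ge \inf_{\varphi\in\calC_1}[F(\varphi)+I_B^{xy}(\varphi)]$.

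For the Laplace upper bound I would, given any $\varphi$ with $I_B^{xy}(\varphi)<\infty$ and associated $\nu\in L^2$ — first mollifying $\varphi$ so the construction is legitimate, since $\nabla_z\log p_\eta$ is controlled only on compacts — feed the (essentially deterministic) control $u^\eta_t := \nu_t - \eta\,\sigma(t,\varphi_t)^\top\nabla_z\log p_\eta(t,\varphi_t;1,y)$ into the representation. The same Hamilton--Jacobi cancellation gives $\limsup_\eta \tfrac{1}{2}\int_0^1|u^\eta_t|^2\,dt \le \tfrac{1}{2}\int_0^1|\nu_t|^2\,dt - c(x,y) = I_B^{xy}(\varphi)$, while a Gr\"onwall bound on $[0,1-\delta]$ and the terminal contraction toward $y$ on $[1-\delta,1]$ force $\bar X^{\eta,u^\eta,xy}\to\varphi$ in probability; continuity of $F$ then yields $\limsup_\eta\bigl(-\eta\log\bE[\,\cdots\,]\bigr)\le F(\varphi)+I_B^{xy}(\varphi)$, and optimising over $\varphi$ (and removing the mollification) closes this direction. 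Uniformity over $x,y\in K$ I would get by contradiction: were $\sup_{x,y\in K}|\,\cdots\,|$ not to vanish, choose $\eta_n\downarrow 0$ and $(x_n,y_n)\to(x_\star,y_\star)\in K\times K$ violating the bound by a fixed $\varepsilon>0$; since the asymptotics in Assumption \ref{ass:log-gradient-convergence} hold uniformly on compacts, every estimate above goes through with the moving endpoints and gives $-\eta_n\log\bE[\exp(-\tfrac{1}{\eta_n}F(X^{\eta_n,x_n y_n}))]\to\inf_\varphi[F(\varphi)+I_B^{x_\star y_\star}(\varphi)]$, which together with continuity of $(x,y)\mapsto\inf_\varphi[F(\varphi)+I_B^{xy}(\varphi)]$ contradicts the choice of $(x_n,y_n)$.

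The step I expect to be the real obstacle is the terminal layer as $t\uparrow 1$: the Doob drift $\eta\sigma\sigma^\top\nabla_z\log p_\eta(t,\cdot;1,y)$ is singular there, so one must show — uniformly in $\eta$, in $y\in K$, and over controls of bounded energy — that the controlled bridge stays in a neighbourhood of $y$ on $[1-\delta,1]$ whose radius vanishes with $\delta$, and that the accompanying divergences cancel exactly against $-\eta\log p_\eta(0,x;1,y)\to c(x,y)$ via the dynamic-programming identity for $\psi$. Making this, and the weak-limit identification, robust as the endpoints range over $K$ is precisely where Assumptions \ref{ass:b-sigma-local-properties}--\ref{ass:log-gradient-convergence} are used in full.
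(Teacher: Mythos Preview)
Your overall strategy --- realize the bridge via the Doob $h$-transform, apply the Budhiraja--Dupuis variational representation, and run the tightness/limit-ODE/Laplace-bound machinery of \cite{BD19} --- is exactly what the paper does. Where you diverge is in two places, one a genuine difference in route and one a genuine gap.

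The Hamilton--Jacobi substitution $\nu_t = u_t - \sigma^\top\nabla_z\psi$ and the telescoping identity $\tfrac{1}{2}\int|u|^2 = \tfrac{1}{2}\int|\nu|^2 - c(x,y)$ are \emph{not} used by the paper and are not needed. The paper keeps the limiting $h$-transform drift $g^y := \lim_\eta \eta(\sigma\sigma^\top)\nabla_z\log p_\eta$ (supplied abstractly by Assumption~\ref{ass:log-gradient-convergence}) in the controlled limit ODE throughout: the limit dynamics are $\dot{\bar\varphi} = (b+g^y)(t,\bar\varphi) + \sigma(t,\bar\varphi)\nu$, and $I_B^{xy}(\varphi)$ is the minimal $\tfrac{1}{2}\int|\nu|^2$ over $\nu$ driving \emph{this} ODE from $x$ (the theorem statement suppresses $g^y$, but the proof and the Brownian example make the intended definition clear). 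Your route requires in addition that $g^y = -\sigma\sigma^\top\nabla\psi$ with $\psi$ classically $C^1$ along the trajectory and satisfying the HJ equation there, none of which the stated assumptions provide. Staying in the $h$-transformed variables is both simpler and exactly what Assumptions~\ref{ass:bounded-lipshitzness-of-log-gradients} and~\ref{ass:log-gradient-convergence} are tailored for.

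The real gap is the terminal layer, which you correctly flag but do not resolve. The paper's device is \emph{time reversal}: tightness of the controlled bridges is proved on $[0,2/3]$ using the forward $h$-transformed SDE (where $g^y_\eta$ is uniformly bounded and Lipschitz away from $t=1$), and separately on $[1/3,1]$ by writing the bridge backwards as $\hat X^{\eta,xy}_t = X^{\eta,xy}_{1-t}$, whose reversed drift $\overset{\leftarrow}{g}{}^x_\eta$ (Eq.~\eqref{eq:g-backwards-def}--\eqref{eq:reverse-xy}) is regular near the original $t=1$. The same reversal underlies Lemma~\ref{lem:controlled-bridges-near-y}, which shows controlled bridges stay near $y$ on $[1-\delta,1]$. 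Your ``terminal contraction'' heuristic does not substitute for this, because it would require forward control of $g^y_\eta$ as $t\uparrow 1$ that the assumptions simply do not give. For uniformity over $K\times K$, the paper also proceeds constructively rather than by contradiction: a near-optimal control for $(x,y)$ is re-used for $(x',y')$, a Gr\"onwall comparison on $[0,1-\delta]$ together with Lemma~\ref{lem:controlled-bridges-near-y} on $[1-\delta,1]$ gives asymptotic equicontinuity of $\tilde F_\eta$, and this is combined with continuity of $\tilde F$ (Proposition~\ref{prop:F-tilde-continuous}) via a finite cover. Your subsequence argument hides this same equicontinuity inside ``every estimate above goes through with moving endpoints'', so it does not actually shortcut the work.
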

Combining this uniform Laplace principle with existing LDPs for the static problem, can obtain an LDP for the dynamic problem. Such LDPs for the static problem are studied in \cite{nilsson2025large} and typically require additional assumptions, most importantly the following.
\begin{assumption}
    The sequence of cost functions $c_\eta \coloneq -\eta \log \left(\frac{dR_\eta}{d(\mu \times \nu)}\right)$ converges uniformly on compacts in $D \times D$ to a function $c: D \times D \to \bR$, as $\eta \downarrow 0$.
\end{assumption}
Under this additional assumption we have the static LDP, with a rate function $I_S(x, y)$ that is equal to \[(c - (-\psi \oplus \phi))(x,y) = c(x, y) - (-\psi(x) + \phi(y)),\] under the assumption of compact supports.
Here $(\psi, \phi)$ are Kantorovich potentials \cite[Ch. 5]{Vil09} related to the Kantorovich problem for the limit cost function $c$.
\begin{theorem}
\label{thm:LDP_dSB}
    The sequence of Schr\"odinger bridges $\{\pi_\eta\}_{\eta > 0}$ satisfies an LDP on $\calC_1$ with the rate function $I_D$ given by
    \begin{equation}
    \begin{split}
        I_D(\varphi) &= I_S(\varphi_0, \varphi_1) + I_B^{\varphi_0 \varphi_1}(\varphi) \\
        &= (c - (-\psi \oplus \phi))(\varphi_0, \varphi_1) + \inf_{\nu \in U^{\varphi_0 \varphi_1}_\varphi} \frac{1}{2} \int_0^1 |\nu(t)|^2 dt,
    \end{split}
    \end{equation}
    where $U^{xy}_\varphi \coloneq \{\nu \in L^2([0,1]:\bR^d): \varphi(\cdot) = \int_0^{\cdot} (b(s, \varphi(s) + \sigma(s, \varphi(s))\nu_s)ds\}$, and $(\psi, \phi)$ are a pair Kantorovich potentials.
\end{theorem}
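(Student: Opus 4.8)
The plan is to reduce the dynamic problem to the static one by disintegrating the Schr\"odinger bridge over its endpoints, and then to glue the static LDP to the uniform Laplace principle of Theorem~\ref{thm:uniform-laplace-principle} inside the weak convergence formulation of the LDP. The starting point is the well-known factorization (see Section~\ref{sec:prelim}) of the minimizer of \eqref{eq:dynamic-SB-def} relative to $R_\eta$,
\[
    \pi_\eta(\cdot) \;=\; \int_{D\times D} R_\eta^{xy}(\cdot)\,\pi_{\eta,01}(dx\,dy),
\]
in which $\pi_{\eta,01}$, the time-$(0,1)$ marginal of $\pi_\eta$, is exactly the static Schr\"odinger bridge, while $R_\eta^{xy}$ is the law of the conditioned process $X^{\eta,xy}$ of Theorem~\ref{thm:uniform-laplace-principle}. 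Since $\calC_1$ is Polish, by the equivalence theory of the weak convergence approach \cite{Dupuis97} it is enough to prove the Laplace principle with candidate rate $I_D$ and, separately, that $I_D$ is a good rate function; together these give the claimed full LDP.

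For the Laplace principle, I would fix a bounded continuous $F\colon\calC_1\to\bR$ and introduce
\[
    G_\eta(x,y) \;\coloneq\; -\eta\log\bE\!\left[\exp\!\left(-\tfrac1\eta F(X^{\eta,xy})\right)\right], \qquad G(x,y) \;\coloneq\; \inf_{\varphi\in\calC_1}\left\{F(\varphi) + I_B^{xy}(\varphi)\right\}.
\]
By the compact-support assumption there is a compact $K\Subset D$ containing $\spt\mu\cup\spt\nu$, hence the supports of every $\pi_{\eta,01}$, and Theorem~\ref{thm:uniform-laplace-principle} gives $\sup_{x,y\in K}|G_\eta(x,y)-G(x,y)|\to 0$. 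Using the factorization and Tonelli's theorem,
\[
    -\eta\log\int_{\calC_1} e^{-F(\varphi)/\eta}\,\pi_\eta(d\varphi) \;=\; -\eta\log\int_{K\times K} e^{-G_\eta(x,y)/\eta}\,\pi_{\eta,01}(dx\,dy),
\]
and the uniform bound lets me replace $G_\eta$ by $G$ up to an error tending to $0$. As $F$ is bounded, $G$ is bounded on $K$ (with $\inf F\le G\le\sup F$, using feasibility of the bridge control problem on $K$ under the standing assumptions) and continuous on $K$, being a uniform limit of the continuous maps $G_\eta$, whose continuity comes from the weak continuity of $(x,y)\mapsto R_\eta^{xy}$. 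Varadhan's lemma applied to the static LDP for $\{\pi_{\eta,01}\}$ then yields
\[
    \lim_{\eta\downarrow 0} -\eta\log\int_{\calC_1} e^{-F(\varphi)/\eta}\,\pi_\eta(d\varphi) \;=\; \inf_{(x,y)\in K\times K}\left\{G(x,y) + I_S(x,y)\right\}.
\]

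To conclude the Laplace principle I would identify the right-hand side with $\inf_{\varphi\in\calC_1}\{F(\varphi)+I_D(\varphi)\}$: since $I_B^{xy}(\varphi)$ is finite only when $\varphi$ is a controlled trajectory running from $x$ to $y$, interchanging the two infima gives $\inf_{(x,y)}\{I_S(x,y)+\inf_\varphi[F(\varphi)+I_B^{xy}(\varphi)]\}=\inf_\varphi\{F(\varphi)+I_S(\varphi_0,\varphi_1)+I_B^{\varphi_0\varphi_1}(\varphi)\}=\inf_\varphi\{F(\varphi)+I_D(\varphi)\}$, the endpoint restriction $\varphi_0,\varphi_1\in K$ being automatic since $I_S=+\infty$ off $\spt\mu\times\spt\nu$. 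It then remains to verify that $I_D$ is a good rate function, which I would do through the control representation: $\{I_D\le M\}$ consists of the $\varphi$ solving $\dot\varphi = b(\cdot,\varphi)+\sigma(\cdot,\varphi)\nu$ for some control $\nu$ with $\tfrac12\int_0^1|\nu_t|^2\,dt\le M$ and endpoints $(\varphi_0,\varphi_1)$ in the compact set $\{I_S\le M\}\subseteq\spt\mu\times\spt\nu$; the Lipschitz and linear-growth bounds on $b,\sigma$ from Assumptions~\ref{ass:b-sigma-local-properties}--\ref{ass:log-gradient-convergence} convert the $L^2$ bound on $\nu$ into a uniform modulus of continuity for $\varphi$, so an Arzel\`{a}--Ascoli argument, together with lower semicontinuity of $\nu\mapsto\int|\nu_t|^2dt$ under weak $L^2$ convergence and stability of the ODE, shows $\{I_D\le M\}$ is compact in $\calC_1$ and $I_D$ lower semicontinuous.

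The step I expect to be the main obstacle is the passage through Varadhan's lemma: since the outer integrand $G_\eta$ genuinely depends on $\eta$, one cannot apply Varadhan directly, and the argument leans essentially on the \emph{uniformity} in $(x,y)$ in Theorem~\ref{thm:uniform-laplace-principle}---pointwise convergence of $G_\eta$ would not be enough---as well as on establishing continuity of the limit $G$ on $K$. A secondary technical point is the compactness argument underlying goodness of $I_D$, which requires the control-theoretic estimates on trajectories of the reference SDE.
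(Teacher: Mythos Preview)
Your approach is correct and, at the conceptual level, parallels the paper's, but your execution is more direct than what the paper actually does. The paper works through the Donsker--Varadhan variational formula $-\eta\log\bE e^{-F/\eta}=\inf_{m}\{\int F\,dm+\eta\calH(m\|\pi_\eta)\}$, disintegrates the \emph{minimizing} measure $m=m_{01}\otimes m_{\cdot|01}$, and then has to treat the Laplace upper and lower bounds separately (with a further auxiliary modification $F\mapsto F^\varepsilon$ to localize to a compact); in each bound it recognizes the inner problem as the variational formula for the bridge and recovers your $G_\eta=\tilde F_\eta$. You instead disintegrate $\pi_\eta$ itself and use Fubini to obtain the exact identity $-\eta\log\int e^{-F/\eta}\,d\pi_\eta=-\eta\log\int e^{-G_\eta/\eta}\,d\pi_{\eta,01}$ in one stroke, so no separate upper/lower argument is needed before invoking the static Laplace principle. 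The paper's route makes the role of the weak-convergence machinery more explicit, whereas yours is shorter and shows that the result is really ``static LDP $+$ uniform bridge Laplace principle'' with nothing else.

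One small point to tighten: you deduce continuity of $G$ on $K$ as a uniform limit of continuous $G_\eta$, relying on weak continuity of $(x,y)\mapsto R_\eta^{xy}$. The paper instead proves continuity of $G$ (their $\tilde F$) directly via a Gr\"onwall argument on the limit controlled ODE (Proposition~\ref{prop:F-tilde-continuous}), and does not verify continuity of $G_\eta$ for fixed $\eta$. Your route is valid provided you can justify that weak continuity of the bridge laws under the standing assumptions; otherwise you can simply cite the paper's Proposition~\ref{prop:F-tilde-continuous} for continuity of $G$ and drop the uniform-limit argument. The goodness argument you sketch is along the same lines as what the paper indicates (and defers to the analogous Freidlin--Wentzell compactness in \cite{BD19}).
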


We note in particular that, under the assumption of compact supports, the special case that is our result applied to standard Brownian motion reference dynamics, i.e., taking $f \equiv 0$ and $\sigma \equiv I_d$, the $d$-dimensional identity matrix, is precisely the main result of \cite{Kato24}.

\begin{corollary}
\label{cor:BM}
    Let $R _\eta$ be the path measure associated with an $\sqrt{\eta}$-scaled Brownian motion. Then, the associated Schr\"odinger bridges satisfy the LDP of Theorem \ref{thm:LDP_dSB} with rate function given by %
    \begin{equation}
    \begin{split}
        I_D(\varphi) = -(-\psi \oplus \phi)(\varphi_0, \varphi_1) + \frac{1}{2} \int_0^1 |\varphi'_t|^2 dt. %
    \end{split}
    \end{equation}
    where $(\psi, \phi)$ is a pair of Kantorovich potentials for the OT problem with quadratic cost $c(x, y) = \frac{1}{2}|x-y|^2$; the second term is interpreted as $+\infty$ for any non-absolutely continuous $\varphi \in \calC_1$.
\end{corollary}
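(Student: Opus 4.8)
The plan is to obtain Corollary~\ref{cor:BM} directly from Theorem~\ref{thm:LDP_dSB}, specialized to $f\equiv 0$ and $\sigma\equiv I_d$; the only work is to (i) check that this choice of reference dynamics meets the hypotheses, (ii) identify the limiting cost $c$ and hence the relevant Kantorovich potentials, and (iii) evaluate the bridge term of $I_D$ for these coefficients. For (i), the constant coefficients $b\equiv 0$, $\sigma\equiv I_d$ trivially satisfy the local regularity in Assumptions~\ref{ass:b-sigma-local-properties}--\ref{ass:log-gradient-convergence} (local Lipschitzness, boundedness, smoothness are immediate), $\sigma\equiv I_d$ is uniformly elliptic, which is the nondegeneracy needed for the bridges $X^{\eta,xy}$ and for Theorem~\ref{thm:uniform-laplace-principle}, and the remaining $\log$-gradient-type conditions hold since the reference has an explicit Gaussian kernel. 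For (ii), the time-$1$ transition density of the $\sqrt\eta$-scaled Brownian motion is $p_\eta(x,y)=(2\pi\eta)^{-d/2}\exp\bigl(-|x-y|^2/(2\eta)\bigr)$, so $c_\eta(x,y)=\tfrac12|x-y|^2+o(1)$ uniformly on compact subsets of $D\times D$ (the normalization term $\tfrac{d\eta}{2}\log(2\pi\eta)$, and any density correction from comparing the endpoint law $R_{\eta,01}$ against $\mu\times\nu$ rather than against Lebesgue, both vanish on compacts under the standing assumptions). Thus $c(x,y)=\tfrac12|x-y|^2$, and the potentials $(\psi,\phi)$ appearing in Theorem~\ref{thm:LDP_dSB} form a Kantorovich pair for the quadratic-cost OT problem.

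For (iii), with $b\equiv 0$ and $\sigma\equiv I_d$ the admissible set reduces to $U^{xy}_\varphi=\{\nu\in L^2([0,1]:\bR^d):\varphi(t)=x+\int_0^t\nu_s\,ds\text{ for all }t,\ \varphi(1)=y\}$, which is nonempty exactly when $\varphi$ is absolutely continuous with $\varphi'\in L^2$, $\varphi_0=x$, $\varphi_1=y$, and in that case equals the singleton $\{\varphi'\}$. Hence $\inf_{\nu\in U^{\varphi_0\varphi_1}_\varphi}\tfrac12\int_0^1|\nu_t|^2\,dt=\tfrac12\int_0^1|\varphi'_t|^2\,dt$, the classical Schilder/Freidlin--Wentzell action, understood as $+\infty$ whenever $\varphi$ fails to be absolutely continuous with square-integrable derivative, which is precisely the convention in the statement. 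Substituting this and $c(x,y)=\tfrac12|x-y|^2$ into the rate function of Theorem~\ref{thm:LDP_dSB} yields $I_D(\varphi)=-(-\psi\oplus\phi)(\varphi_0,\varphi_1)+\tfrac12\int_0^1|\varphi'_t|^2\,dt$. The term $\tfrac12|\varphi_0-\varphi_1|^2$ one might expect from the cost appearing in $I_S$ does not survive, because the conditioned bridge rate function $I_B^{xy}$ of Theorem~\ref{thm:uniform-laplace-principle} is normalized so that $\inf_\varphi I_B^{xy}(\varphi)=0$ (apply that theorem with $F\equiv 0$): it is the Freidlin--Wentzell action minus its minimum over admissible paths from $x$ to $y$, and that minimum is $\tfrac12|x-y|^2=c(x,y)$, realized by the straight line, which cancels the explicit $c$ in $I_S(\varphi_0,\varphi_1)$.

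The computation is otherwise routine; the one point requiring care is the bookkeeping of these additive normalizations together with the sign and duality conventions for the potentials -- one must match the pair produced by Theorem~\ref{thm:LDP_dSB} to the standard quadratic-cost Kantorovich potentials and verify that the constant $c(\varphi_0,\varphi_1)$ implicit in the normalized bridge rate function cancels the $c$ in $I_S$, so that $I_D$ vanishes exactly on the displacement interpolations $\varphi_t=(1-t)\varphi_0+t\varphi_1$ with $(\varphi_0,\varphi_1)$ in the support of the optimal quadratic-cost plan (where $-(-\psi\oplus\phi)(\varphi_0,\varphi_1)=-\tfrac12|\varphi_0-\varphi_1|^2$). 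As a final remark I would record that the resulting $I_D$ coincides with Kato's rate function in \cite{Kato24}, which is the precise sense in which Theorem~\ref{thm:LDP_dSB} recovers that result.
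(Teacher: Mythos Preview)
Your approach is the paper's: specialize Theorem~\ref{thm:LDP_dSB} to $b\equiv 0$, $\sigma\equiv I_d$, identify $c(x,y)=\tfrac12|x-y|^2$, compute the bridge rate, and observe the cancellation of $c$. The verification of the assumptions and of the limit cost is fine, and the final formula and its agreement with \cite{Kato24} are exactly what the paper records in Section~\ref{sec:examples_BM}.

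There is, however, an internal inconsistency in (iii). Reading $U^{xy}_\varphi$ literally as $\{\nu:\varphi_t=x+\int_0^t\nu_s\,ds\}$ gives $I_B^{xy}(\varphi)=\tfrac12\int_0^1|\varphi'_t|^2\,dt$, and substituting \emph{that} into $I_D=(c-(-\psi\oplus\phi))+I_B$ leaves an extra $\tfrac12|\varphi_0-\varphi_1|^2$; your ``Substituting \ldots yields'' sentence is therefore not correct as written. You then patch this by asserting $I_B^{xy}$ is normalized so that $\inf_\varphi I_B^{xy}(\varphi)=0$ (via $F\equiv 0$ in Theorem~\ref{thm:uniform-laplace-principle}), which is a valid observation but contradicts the computation you just made: with your $U^{xy}_\varphi$ the infimum is $\tfrac12|x-y|^2$, not $0$. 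The resolution, which is what the paper does in Section~\ref{sec:examples_BM}, is that the limit ODE defining the admissible controls carries the Doob drift $g^y(t,x)=-(x-y)/(1-t)$ from the bridge (cf.\ \eqref{eq:noise-free-controlled-BB-00} and \eqref{eq:controlled-BB-limit}), and with that drift one obtains directly
\[
I_B^{xy}(\varphi)=\tfrac12\int_0^1|\varphi'_t|^2\,dt-\tfrac12|x-y|^2,
\]
so the cancellation with $c$ in $I_S$ is immediate. Either rewrite (iii) using the controlled bridge ODE with $g^y$, or present your $F\equiv 0$ argument as identifying a missing $-c(x,y)$ term in the literal formula rather than as an afterthought; as it stands the two halves of (iii) do not fit together.
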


\section{Preliminaries}
\label{sec:prelim}
As mentioned in Section \ref{sec:main}, we consider reference measures $R \in \calP(\calC_1)$, that are the path measures of the solution to an SDE started in $\mu$:
\begin{equation}\label{eq:R-SDE}
\begin{split}
    dX_t &= f(t, X_t) dt + \sigma(t, X_t) dW_t, \ \ t \in [0, 1], \\ 
    X_0 &\sim \mu,
\end{split}
\end{equation}
With $f=0$ and $\sigma(t, x) = \sqrt{\eta} \in \bR$, $R = R_\eta$ is just an ($\eta$-scaled) Brownian motion started in $\mu$, and we recover the setting used in \cite{Kato24} (see Corollary \ref{cor:BM}).

The relative entropy in the corresponding dynamic SB problem \ref{eq:dynamic-SB-def} can be decomposed, for any $\pi \in \mathcal{P}(\calC _1)$ such that $\pi \ll R$, as 
\begin{equation*}
\begin{split}
    \calH (\pi \mid\mid R) &= \bE ^{\pi} \left[ \log \frac{d \pi _{01} }{d R_{01}} (X_0, X_1) \right] + \bE ^{\pi} \left[ \log \frac{d \pi^{X_0 X_1} }{d R^{X_0 X_1}} (X) \right] \\
    &= \calH (\pi _{01} \mid\mid  R_{01}) + \int _{(\bR ^d)^2} \calH (\pi^{xy} \mid\mid R^{xy}) \pi _{01} (dx, dy),
\end{split}
\end{equation*}
where $\pi ^{xy}$ and $R ^{xy}$ denote the dynamics obtained from $\pi$ and $R$, respectively, when conditioning on the initial and final points $x$ and $y$. In cases where $\frac{d\pi}{dR}(X)$ does not exist, we have $\calH(\pi \mid \mid R) = \infty$. 
For $\pi _{01}$-a.s.\ every $(x,y) \in (\bR ^d)^2$, $\calH (\pi^{xy} \mid\mid R^{xy})$ is non-negative and zero if and only if $\pi^{xy} = R^{xy}$. 
Thus, the dynamic SB problem \ref{eq:dynamic-SB-def} amounts to the following problem:
\begin{equation}\label{eq:static-SB-from-dynamic}
    \hat{\pi}_{01} = \argmin_{\pi_{01} \in \Pi(\mu, \nu)} \calH (\pi_{01} \mid\mid R_{01}),
\end{equation}
combined with interpolating this coupling by $R^{\cdot}$, the bridge processes associated with $R$, as $\hat{\pi} = \hat{\pi}_{01} \otimes R^{\cdot}$.
Here, $\Pi(\mu, \nu)$ is the ordinary set of couplings appearing in optimal transport,
\begin{equation*}
    \Pi (\mu, \nu) \coloneq \left\{ \pi \in \calP (\bR^d \times \bR^d): \pi_0 = \mu, \ \pi_1 = \nu \right\}.
\end{equation*}

If in \eqref{eq:R-SDE} we replace one replaces $\sigma(t,x)$ with a diffusion-coefficient $\sqrt{\eta} \sigma(t,x)$, decaying with the noise-level parameter $\eta > 0$, we have a reference $R_\eta$ for each $\eta$, as for the $\eta$-scaled Brownian motion used in \cite{Kato24}. Thereby, keeping $\mu$ and $\nu$ fixed, we obtain a \emph{sequence} $\{\pi_\eta\}$ of Schr\"odinger bridges.
As $\eta \downarrow 0$, the plans $\pi _\eta$ behave more and more like optimal transport (OT) plans, i.e., more and more like a solution to the Monge-Kantorovich problem (for the static plans $\{\pi_{\eta, 01}\}_{\eta > 0}$),
\begin{equation}\label{eq:Monge-Kantorovich}
    \inf_{\pi \in \Pi(\mu, \nu)} \int c \, d\pi,
\end{equation}
for some cost function $c: \bR^d \times \bR^d \to \bR$.
It is a classical fact that, under a general large deviation (see Section \ref{sec:LDPs}) assumption on the reference sequence $\{R_\eta\}_{\eta > 0}$ (e.g., given by Schilder's or Freidlin-Wentzell's theorem), $\pi_\eta$ goes weakly to a solution $\pi$ of \eqref{eq:Monge-Kantorovich}, where $c$ is given by the rate function, a fact that also holds for the dynamic setting \cite{leonard2012schrodinger, mikami2004monge}.
This is however not itself an LDP, and no information about the speed of convergence is given.

\subsection{Large deviation principles}\label{sec:LDPs}

We say that a sequence of probability measures $\{\mu_\kappa\}_{\kappa \in (0, 1)}$ on a topological (Polish for our current purposes) space $S$ with its Borel $\sigma$-algebra $\calB$ satisfies a \emph{large deviation principle} (LDP) with the \emph{rate function} $I: S \to \bR$ if to any measurable set $A \subseteq S$, the following inequalities hold:
\begin{equation}\label{eq:LDP-def}
    -\inf_{x \in \interior{A}} I(x) \leq \liminf_{\eta \downarrow 0} \eta \log \mu_\eta(A) \leq \limsup_{\eta \downarrow 0} \eta \log \mu_\eta(A) \leq -\inf_{x \in \close{A}} I(x),
\end{equation}
where $\interior{A}$/$\close{A}$ denote the interior/closure of $A$, respectively.
In particular, the left- and right-most inequalities hold for any measurable open and closed set, respectively.
$I$ is said to be a \emph{good} rate function if its level sets, \[I^{-1}([0,a]), a \geq 0,\] are compact.

Under goodness of the rate function (on, say, a Polish space), an equivalent formulation \cite{BD19, Dembo98} of the large deviation principle is as a \emph{Laplace principle}: for any bounded continuous function $F: S \to \bR$, %
\begin{equation*}
    \lim_{\kappa \downarrow 0} -\eta \log \bE^{\mu_\eta} \left[ \exp \left\{-\frac{1}{\eta} F \right\} \right] = \inf_{x \in S} \left[F(x) + I(x)\right].
\end{equation*}

Intuitively, an LDP characterizes in an exact way the asymptotic exponential escape of probability mass away from events.
In a very rough sense, these definitions say that the probability of an event $A$ decays like $e^{-\frac{1}{\kappa} \inf_{x \in A} I(x)}$ as $\eta \downarrow 0$.

It is shown in \cite{bernton2022entropic} that the static SB plans follow a ``weak-type'' (in the terminology of \cite{Kato24}) LDP. 
If one assumes that the marginals $\mu$ and $\nu$ have compact supports, this ``weak-type'' LDP is further a (\emph{full}) LDP.
In \cite{Kato24}, the author shows that the dynamic SBs also follow a weak-type LDP, for the specific setup with Brownian reference dynamics.
The main tool applied in \cite{Kato24} is the fact that the bridge processes reference dynamics follow an LDP with exponential continuity, and the proof relies heavily on the Gaussian properties of the reference process. 

In an effort to extend the results of \cite{Kato24} to other types of dynamics $R_\eta$, for example given by an SDE like \eqref{eq:R-SDE}, we here work directly with properties of conditioned SDEs, and arrive at our sought-after uniform Laplace principle by adapting the methods in \cite{BD19}. We then combine this uniform Laplace principle with the Laplace principle for static SBs with compact supports to obtain an LDP for dynamic SBs.

\section{A uniform Laplace principle for bridge processes}
\label{sec:development}

Henceforth we assume that the reference path measures $\{R_\eta\}$ correspond to SDEs of the form
\begin{equation}\label{eq:R-eta-SDE}
    dX^{\eta}_t = b(t, X^{\eta}_t) dt + \sqrt{\eta} \sigma(t, X^{\eta}_t) dW_t, \ \ X^{\eta}_0 \sim \mu.
\end{equation}
where $b: [0, 1] \times \close{D} \to \bR^d$ and $\sigma: b: [0, 1] \times \close{D} \to \bR^{d \times d}$ are defined on an (open, connected) domain $D \subseteq \bR^d$.
\begin{assumption}\label{ass:b-sigma-local-properties}
    $b$ and $\sigma$ are locally bounded, $b$ is locally Lipschitz, and $\sigma$ is $C^1$.
    \eqref{eq:R-eta-SDE} has a strong solution, which stays in $\close{D}$ with probability $1$.
\end{assumption}%
\noindent The local Lipschitz conditions imply strong uniqueness of the solution on $[0, 1]$.

The law of the bridge processes of the related dynamic SBs $\{\pi_\eta\}$, conditioned on its endpoints, is the same as that of \eqref{eq:R-eta-SDE}, by the discussion in Section \ref{sec:prelim}.
The next assumption is aimed at ensuring well-behaved transition densities for these bridge processes.
\begin{assumption}\label{ass:transition-density-exists-and-Hloc}
    The process $X^\eta$ admits a smooth transition density $p(t, X^{\eta, xy}_t; 1, y)$.
    For $t=0$, $p(0, x; \cdot, \cdot)$ is in the space $L^2([t_0,1]; \tilde{H}_{\mathrm{loc}})$, as defined in \cite{haussmann1986time}, for any $t_0 > 0$. %
\end{assumption}

Assumption \ref{ass:transition-density-exists-and-Hloc} follows, for example, from the following assumptions used in \cite{menozzi2021density}.
\begin{taggedassumption}{\ref*{ass:transition-density-exists-and-Hloc}'}
    The drift $b$ satisfies a linear growth property. For $\sigma$, there exists $\kappa_0 > 0$ such that for any $t \in [0,1], x \in \close{D}, v \in \bR^d$, $\kappa_0^{-1} |v|^2 \leq \langle v, \sigma \sigma^\top(t,x) v \rangle \leq \kappa_0 |v|^2$.
    Further, there is a constant $\alpha \in (0,1)$ such that $\norm{\sigma(t,x) - \sigma(t,y)} \leq \kappa_0 |x - y|^\alpha$.
\end{taggedassumption}
Under Assumption \ref{ass:transition-density-exists-and-Hloc}', the results of \cite{menozzi2021density} ensure that the transition density $p(0, x; t, \cdot)$ exists, is smooth and sub-Gaussian, and has a sub-Gaussian gradient.
The scale of this sub-Gaussian grows with $t$, bounding $p(0, x; t, \cdot)$ and its derivatives for $t \geq t_0$, for any choice of $t_0 > 0$.
This means that $p(0, x; \cdot, \cdot)$ is in the space $L^2([t_0,1]; \tilde{H}_{\mathrm{loc}})$ for any $t_0 > 0$.

The law of the bridges of \eqref{eq:R-eta-SDE}, obtained by conditioning on the initial and final point of $X^\eta$, is given by the corresponding Doob $h$-transform \cite{baudoin2002conditioned, rogers2000diffusions}. Denoting by $\{X^{\eta, xy}_t\}$ the bridge process which is $x$ at $t=0$ and $y$ at $t=1$, it follows that
\begin{equation}\label{eq:R-eta-SDE-h-transform}
\begin{split}
    dX^{\eta, xy}_t = &b(t, X^{\eta, xy}_t) dt + \sqrt{\eta} \sigma(t, X^{\eta, xy}_t) dW_t \\
    &+ g ^y _\eta(t, X^{\eta, xy}_t) dt, \ \ X^{\eta, xy}_0 = x,
\end{split}
\end{equation}
on $[0, 1)$, where $g _\eta ^{y} (t,x)$ is the $h$-transform term, defined by
\begin{align}
\label{eq:hTrans}
g ^y _\eta (t,x) = \eta \sigma \sigma^\top(t, x) \nabla_x \log p(t, x; 1, y).
\end{align}
If $X^\eta$ is pure Brownian motion (i.e., $b = 0$ and $\sigma = I_d$ in \eqref{eq:R-eta-SDE}), $g_\eta^y(t, x) = -\frac{x - y}{t}$ for all $\eta > 0$.

By Assumption \ref{ass:transition-density-exists-and-Hloc}, the process $X^{\eta}$, and specifically $X^{\eta, x}$ for any $x \in D$, can be reversed in time. %
Let us use the notation $\hat{X}_t \coloneq X_{1-t}$ for a general process $X$.
We can then, as an alternative to \eqref{eq:R-eta-SDE-h-transform}, use the processes $\hat{X}^{\eta, x}_t = X^{\eta, x}_{1-t}$ to condition on $x$ in the opposite (time) direction.
The marginal density of $X^{\eta, x}$ is $p(0, x; t, z)$, and thus its time reversal is 
\begin{equation}\label{eq:reverse-x}
\begin{split}
    d\hat{X}^{\eta, x}_t = &-b(1 - t, \hat{X}^{\eta, x}_t) dt + \sqrt{\eta} \sigma(1 - t, \hat{X}^{\eta, x}_t) d\hat{W}_t \\
    &+ \overset{\leftarrow}{g}{}^x_\eta(t, \hat{X}^{\eta, x}_t) dt,%
    \ \ \hat{X}^{\eta, x}_0 = X^{\eta, x}_1,
\end{split}
\end{equation}
where the last drift term is defined by
\begin{equation}\label{eq:g-backwards-def}
    [\overset{\leftarrow}{g}{}^x_\eta(t, \hat{x})]_i \coloneq \eta p(0, x; 1-t, \hat{x})^{-1} \sum_{j=1}^d \frac{\partial}{\partial \hat{x}_j} [\sigma \sigma^\top_{ij}(1 - t, \hat{x}) p(0, x; 1-t, \hat{x})].
\end{equation}
Conditioning $X^{\eta, x}$ on $X_1 = \hat{X}_0 = y$ yields $X^{\eta, xy}_{\cdot} = \hat{X}^{\eta, xy}_{1-\cdot}$, following \eqref{eq:reverse-x} started in $y$:
\begin{equation}\label{eq:reverse-xy}
\begin{split}
    d\hat{X}^{\eta, xy}_t = &-b(1 - t, \hat{X}^{\eta, xy}_t) dt + \sqrt{\eta} \sigma(1 - t, \hat{X}^{\eta, xy}_t) d\hat{W}_t \\
    &+ \overset{\leftarrow}{g}{}^x_\eta(t, \hat{X}^{\eta, xy}_t) dt,%
    \ \ \hat{X}^{\eta, xy}_0 = y.
\end{split}
\end{equation}

The bridges \eqref{eq:R-eta-SDE-h-transform} inherit the strong existence and uniqueness properties on $[0, 1)$ from $X^{\eta, x}$. %
However, it is not possible to put a uniform (local) Lipschitz assumption in $[0,1)$ on the drift term $g^y_\eta$, as done on $b$, since this would even rule out Brownian bridges.
This is perhaps the main complication vis-\`a-vis the analysis in \cite[Section 3.2]{BD19}, and why the results do not fall under the classical Freidlin-Wentzell theory.

The main aim of this section is to show the uniform Laplace principle in Theorem \ref{thm:uniform-laplace-principle}. We start by proving the following Laplace principle for the bridge processes $X^{\eta, xy}$.
\begin{theorem}
\label{thm:LDP_bridge}
Assume Assumptions \ref{ass:b-sigma-local-properties}-\ref{ass:log-gradient-convergence} hold and fix $x,y$. For $F$ bounded and continuous, the sequence $\{ X^{\eta, xy} \} _\eta $ of bridge processes satisfies 
\begin{align*}
    \lim _{\eta \to 0} -\eta \log \mathbb{E} \left[ \exp\left\{ -\frac{1}{\eta} F(X^{\eta, xy}) \right\} \right] = \inf _{\varphi \in \calC _1} \left[ F(\varphi) + \inf _{\calU _\varphi} \frac{1}{2} \int _0 ^1 | \nu _s | ^2 ds  \right].
\end{align*}
\end{theorem}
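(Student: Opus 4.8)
The plan is to establish the Laplace principle in Theorem \ref{thm:LDP_bridge} via the weak convergence (variational) approach of \cite{BD19}, reducing the exponential integral to a stochastic control problem and passing to the limit. The starting point is the variational representation (Boué–Dupuis / Budhiraja–Dupuis formula): for $F$ bounded and continuous,
\begin{equation*}
    -\eta \log \mathbb{E}\left[\exp\left\{-\frac{1}{\eta} F(X^{\eta, xy})\right\}\right] = \inf_{u} \mathbb{E}\left[\frac{1}{2}\int_0^1 |u_s|^2\, ds + F(\bar{X}^{\eta, xy, u})\right],
\end{equation*}
where the infimum is over adapted $L^2$ controls $u$ and $\bar{X}^{\eta, xy, u}$ solves the controlled bridge SDE obtained from \eqref{eq:R-eta-SDE-h-transform} by adding a drift $\sigma(t, \cdot) u_t$. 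The strategy is then the standard two-sided argument: a Laplace upper bound (compactness/tightness of near-optimal controlled trajectories plus lower semicontinuity, yielding $\liminf \geq \inf[F + I_B]$) and a Laplace lower bound (constructing for any $\varphi$ with $I_B^{xy}(\varphi) < \infty$ a nearly optimal control that forces the controlled process close to $\varphi$, yielding $\limsup \leq \inf[F + I_B]$).

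First I would handle the lower bound: given $\varphi \in \calC_1$ and $\nu \in U^{xy}_\varphi$ nearly achieving $I_B^{xy}(\varphi)$, I would like to choose the control so that the noise is cancelled and the $h$-transform drift is absorbed, i.e. heuristically $u_s \approx \nu_s - \sigma^{-1}(s,\varphi_s) g^y_\eta(s,\varphi_s)$ — but since $g^y_\eta$ blows up near $t=1$ this must be done carefully, e.g. by cancelling the $h$-drift only on $[0, 1-\delta]$ and exploiting the time-reversed representation \eqref{eq:reverse-xy} (where the singular drift sits near the other endpoint) to control the behavior on $[1-\delta, 1]$; the convergence $g^y_\eta \to 0$ (away from $t=1$) supplied by Assumption \ref{ass:log-gradient-convergence} is what makes the extra cost from cancelling $g^y_\eta$ vanish as $\eta \downarrow 0$. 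For the upper bound, I would take $\varepsilon$-optimal controls $u^\eta$ in the representation, use the uniform cost bound $\mathbb{E}\int_0^1 |u^\eta_s|^2 ds \leq 2(\|F\|_\infty + \varepsilon)$ to get tightness of $(\bar{X}^{\eta,xy,u^\eta}, u^\eta)$ (viewing $u^\eta$ as occupation measures on $\bR^d \times [0,1]$), extract a weak limit $(\varphi, \nu)$, verify via a martingale/Girsanov argument that the $\sqrt{\eta}\,\sigma\,dW$ term and the $\eta$-order $h$-transform drift both vanish in the limit so that $\varphi(\cdot) = \int_0^\cdot (b(s,\varphi_s) + \sigma(s,\varphi_s)\nu_s)\,ds$, i.e. $\nu \in U^{xy}_\varphi$, and conclude by Fatou and lower semicontinuity of the quadratic cost.

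The main obstacle is the singularity of the $h$-transform drift $g^y_\eta$ as $t \uparrow 1$: unlike in classical Freidlin–Wentzell theory one cannot impose a uniform Lipschitz bound on the bridge drift on $[0,1)$, so neither the a priori estimates needed for tightness near the terminal time nor the limit identification are automatic. I expect to manage this by splitting the interval at $1-\delta$, treating $[0, 1-\delta]$ with the forward representation \eqref{eq:R-eta-SDE-h-transform} and $[1-\delta, 1]$ with the backward representation \eqref{eq:reverse-xy} (in which the singular drift $\overset{\leftarrow}{g}{}^x_\eta$ is pushed to $t=1$, i.e. the original $t=0$), using the $\tilde{H}_{\mathrm{loc}}$-regularity of the transition density from Assumption \ref{ass:transition-density-exists-and-Hloc} to control the drifts on the good part of each interval, and then letting $\delta \downarrow 0$ after $\eta \downarrow 0$. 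A secondary technical point is ensuring the controlled bridge SDE is well-posed and does not exit $\close{D}$, which I would address using Assumption \ref{ass:b-sigma-local-properties} together with a localization/stopping-time argument, removing the localization at the end via the uniform cost bound. Once Theorem \ref{thm:LDP_bridge} is in hand, Theorem \ref{thm:uniform-laplace-principle} follows by making the above estimates uniform in $(x,y) \in K$, which requires the convergences in Assumptions \ref{ass:transition-density-exists-and-Hloc}–\ref{ass:log-gradient-convergence} to hold uniformly on compacts.
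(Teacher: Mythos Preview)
Your overall architecture---Bou\'e--Dupuis representation for the bridge SDE, then upper and lower Laplace bounds via tightness of near-optimal controls and identification of the limit ODE---is exactly the paper's approach. However, there is a genuine misconception that would derail both bounds. You write that Assumption~\ref{ass:log-gradient-convergence} supplies ``$g^y_\eta \to 0$ (away from $t=1$)'' and that ``the $\eta$-order $h$-transform drift \ldots vanish[es] in the limit''. This is not what the assumption says: it asserts $g^y_\eta \to g^y$ locally uniformly, and $g^y$ is generically \emph{nonzero} (for Brownian motion $g^y_\eta(t,x) = -(x-y)/(1-t)$ is independent of $\eta$; the prefactor $\eta$ in $g^y_\eta$ is exactly cancelled by the $1/\eta$ in $\nabla_x \log p_\eta$). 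Hence your lower-bound control $u_s \approx \nu_s - \sigma^{-1} g^y_\eta$ would, in the limit, incur the cost $\tfrac{1}{2}\int|\nu - \sigma^{-1}g^y|^2$ rather than $\tfrac{1}{2}\int|\nu|^2$, and your upper-bound limit ODE $\varphi' = b + \sigma\nu$ is wrong. The paper instead leaves $g^y_\eta$ in the controlled bridge dynamics and shows that the controlled process converges to the ODE $\varphi' = b + g^y + \sigma\nu$ on $[0,1)$ (Section~\ref{sec:convergence-to-controlled-ode}, equation~\eqref{eq:noise-free-controlled-BB-00}); the set $\calU_\varphi$ in Theorem~\ref{thm:LDP_bridge} should be read with $g^y$ included, and then the lower bound needs no cancellation at all---one simply takes $u \equiv \nu$ for a near-optimal $\nu \in \calU_{\varphi^*}$.

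A second, smaller gap concerns tightness. For non-singular drifts the cost bound $\int|u^\eta|^2 \leq C$ alone would suffice, but here $g^y_\eta$ blows up at $t=1$ and the usual Gronwall estimate is unavailable. The paper does not extract tightness from the cost bound directly; it uses a Cauchy--Schwarz/Girsanov step (equation~\eqref{eq:X_eta_xy_controlled_Cauchy_Schwarz}) to transfer modulus-of-continuity estimates from the \emph{uncontrolled} bridge $X^{\eta,xy}$ to the controlled one $\bar X^{\eta,xy}$ at the price of a factor $e^{M/\eta}$, which is then beaten by the exponential tightness of the uncontrolled bridge from Assumption~\ref{ass:exp-boundedness} (choosing the compact $K_{M+1}$) together with a quadratic-variation bound on the It\^o integral. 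Your idea of splitting at $1-\delta$ and using the reversed representation~\eqref{eq:reverse-xy} to handle $[1-\delta,1]$ is correct and is indeed how the paper covers the terminal interval, but it sits on top of this change-of-measure argument rather than replacing it.
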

As starting point for the analysis we use the representation formula given in Proposition \ref{prop:representation1}, or more specifically the finite-energy version in Proposition \ref{prop:BBrepM}. Because of the assumptions placed on the functions involved, both representations for the bridge processes follow from the corresponding representations for the unconditioned SDEs (see Theorems 3.14 and 3.17, respectively, in \cite{BD19}) together with an application of the $h$-transform.

Let ${\calG_t}$ denote the completion of the filtration generated by $\{W_t\}$ and let $\calA$ denote the collection of admissible controls:
\begin{align}\label{eq:defA}
    \calA = \left\{ \{\nu _t \}_{t \in [0,1]}: \ \nu \textrm{ is } \calG _t\textrm{-progressively measurable}, \ \bE \left[ \int _0 ^1 |\nu _t| ^2 dt \right] < \infty \right\}.
\end{align}

\begin{proposition}[Variational representation for bridges]\label{prop:representation1}
For any bounded, continuous function $F: C([0,1]:\bR ^d) \to \bR$, we have
\begin{equation}\label{eq:representation}
    -\eta \log \bE \left[ \exp \left\{-\frac{1}{\eta} F(X^{\eta, xy}_t)\right\} \right] = \inf_{\nu^\eta \in \mathcal{A}} \bE \left[F(\bar{X}^{\eta, xy}_t) + \frac{1}{2}\int_0^1 | \nu^\eta_s |^2 ds \right],
\end{equation}
where $\calA$ is given in \eqref{eq:defA} and $\bar{X}^{\eta, xy} = \{ \bar{X}^{\eta, xy}_t \}_{t \in [0, 1]} $ is the controlled Brownian bridge characterized by (again interpreted componentwise)
\begin{equation}\label{eq:controlled-BB}
    d\bar{X}^{\eta, xy}_t = (b + g_\eta^y)(t, \bar{X}^{\eta, xy}_t) dt + \sqrt{\eta} \sigma dW_t + \sigma \nu^\eta_t dt, \ \ \bar{X}^{\eta, xy}_0 = x,
\end{equation}

\end{proposition}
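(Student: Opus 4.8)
The strategy is to realize the bridge and its controlled counterpart as images, under a single measurable solution map, of the driving Brownian motion, and then to invoke the Boué--Dupuis variational representation for bounded measurable functionals of Brownian motion, \cite[Theorem~3.14]{BD19}, in its $\sqrt{\eta}$-scaled form. Indeed, by the Doob $h$-transform \eqref{eq:R-eta-SDE-h-transform} the bridge $X^{\eta,xy}$ is the strong solution, started at $x$, of the SDE with drift $b + g^y_\eta$ and diffusion $\sqrt{\eta}\sigma$; and the controlled bridge \eqref{eq:controlled-BB} is the strong solution of the same SDE with the additional adapted, finite-energy drift $\sigma\nu^\eta$. Since $\sigma(t,\cdot)\, d\bigl(\sqrt{\eta}W + \int_0^{\cdot} \nu^\eta_s\, ds\bigr) = \sqrt{\eta}\sigma\, dW + \sigma\nu^\eta\, dt$, both processes are of the form $\Phi^\eta$ applied to $\sqrt{\eta}W$ and to the Cameron--Martin shift $\sqrt{\eta}W + \int_0^{\cdot}\nu^\eta_s\, ds$ respectively, where $\Phi^\eta \colon \calC_1 \to \calC_1$ denotes the (pathwise, measurable) solution functional of that SDE. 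The gain of this formulation is that \cite[Theorem~3.14]{BD19} requires nothing of the functional beyond boundedness and measurability, so that the non-Lipschitz, $t=1$-singular drift $g^y_\eta$ — which is precisely what prevents one from quoting the SDE-level representations of \cite[Section~3.2]{BD19} directly — never needs to be handled head-on.

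The steps would be as follows. First, check that $\Phi^\eta$ is a well-defined Borel map off a Wiener-null set: \eqref{eq:R-eta-SDE-h-transform} inherits strong existence and pathwise uniqueness on $[0,1)$ from $X^{\eta,x}$ (Assumption \ref{ass:b-sigma-local-properties}, with $b + g^y_\eta$ locally Lipschitz with controlled growth in space uniformly on each $[0,1-\delta]$ thanks to the smoothness and sub-Gaussian density/log-gradient estimates behind Assumption \ref{ass:transition-density-exists-and-Hloc}), and the solution extends continuously to $t=1$ with value $y$, almost surely; hence $\Phi^\eta$ is defined and Borel. Second, apply the scaled form of \cite[Theorem~3.14]{BD19} (obtained from the unscaled statement by the usual substitution) to the bounded Borel functional $G \coloneq F\circ\Phi^\eta$ (set to a constant where $\Phi^\eta$ is undefined), which gives
\begin{equation*}
    -\eta \log \bE\bigl[ e^{-\frac{1}{\eta} F(X^{\eta,xy})} \bigr]
    \;=\; -\eta \log \bE\bigl[ e^{-\frac{1}{\eta} G(\sqrt{\eta}W)} \bigr]
    \;=\; \inf_{\nu^\eta \in \calA} \bE\Bigl[ G\bigl( \sqrt{\eta}W + \textstyle\int_0^{\cdot} \nu^\eta_s\, ds \bigr) + \tfrac{1}{2}\int_0^1 |\nu^\eta_s|^2\, ds \Bigr].
\end{equation*}
Third, identify $G\bigl(\sqrt{\eta}W + \int_0^{\cdot}\nu^\eta_s\, ds\bigr)$ with $F(\bar{X}^{\eta,xy})$ almost surely: for bounded controls this follows from Girsanov's theorem, since the shifted path then has law equivalent to that of $\sqrt{\eta}W$, so $\Phi^\eta$ is a.s.\ defined on it and, by pathwise uniqueness, $\Phi^\eta\bigl(\sqrt{\eta}W + \int_0^{\cdot}\nu^\eta_s\, ds\bigr)$ is exactly the solution $\bar{X}^{\eta,xy}$ of \eqref{eq:controlled-BB}; the passage to a general $\nu^\eta\in\calA$ is the standard truncation reduction used within the proof of \cite[Theorem~3.14]{BD19}, which simultaneously confirms that the infimum over $\calA$ is unchanged. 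Substituting gives \eqref{eq:representation}.

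The main obstacle is concentrated in the first and third steps, and both boil down to the singularity of $g^y_\eta$ at $t=1$ (already $g^y_\eta(t,x) = -(x-y)/(1-t)$ for the Brownian bridge): one must know that the $h$-transformed dynamics genuinely produce a $\calC_1$-valued object, i.e.\ that the bridge converges to its right endpoint almost surely, which is where the quantitative density estimates of Assumption \ref{ass:transition-density-exists-and-Hloc} are used, and — the subtler point — that the \emph{controlled} bridge \eqref{eq:controlled-BB} is likewise well-posed all the way up to $t=1$ for \emph{every} admissible control. The latter is not obtained by any direct estimate on the controlled dynamics (whose drift $\sigma\nu^\eta$ is merely adapted and square-integrable) but by transporting the uncontrolled well-posedness along the Girsanov change of measure, which preserves null sets; once this is in place, the remainder is the bookkeeping above.
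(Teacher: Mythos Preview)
Your approach is correct and coincides with the paper's: the paper likewise obtains the representation by applying the $h$-transform to get the bridge SDE \eqref{eq:R-eta-SDE-h-transform} and then invoking \cite[Theorems~3.14 and~3.17]{BD19}, i.e., the Bou\'e--Dupuis representation for bounded measurable functionals of Brownian motion composed with the solution map. Your treatment of the $t=1$ singularity of $g^y_\eta$ --- in particular transferring well-posedness of the controlled bridge from the uncontrolled one via Girsanov --- is more careful than the paper, which states only that the result ``follows from the corresponding representations for the unconditioned SDEs \ldots\ together with an application of the $h$-transform'' and gives no further argument.
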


It turns out that in the representation \eqref{eq:representation}, for a near-optimal choice of control, it is enough to consider controls that lie in a compact set; the following is \cite[Theorem 3.17]{BD19}, stated for our bridge processes. %
\begin{proposition}
\label{prop:BBrepM}
Take a bounded, continuous function $F: \calC_1 \to \bR$ and let $\delta>0$. Then there exists an $M < \infty$, depending on $\norm{F} _\infty$ and $\delta$, such that for all $\eta \in (0,1)$,
\begin{equation}\label{eq:representationM}
    -\eta \log \bE \left[ \exp \left\{-\frac{1}{\eta} F(X^{\eta, xy})\right\} \right] \geq \inf_{\nu^\eta \in \mathcal{A}_{b,M}} \bE \left[F(\bar{X}^{\eta, xy}_t) + \frac{1}{2}\int_0^1 | \nu^\eta_s |^2 ds \right] - \delta,
\end{equation}
where $\mathcal{A}_{b,M} \coloneq \{\nu \in \calA : \nu(\omega) \in S_M \ \forall \omega\}$ and $S_M \coloneq \{\phi \in \calL^2([0,1]:\bR^d) : \int_0^1 |\phi(t)|^2 dt \leq M \}$.
\end{proposition}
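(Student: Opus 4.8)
The plan is to run, directly on the variational representation \eqref{eq:representation}, the standard truncation argument underlying \cite[Theorem 3.17]{BD19}, using that $F$ is bounded. Write $M_F \coloneq \norm{F}_\infty$. The first ingredient is a uniform-in-$\eta$ energy bound for near-optimal controls: plugging $\nu \equiv 0$ into \eqref{eq:representation} shows $\inf_{\nu \in \calA} \bE[F(\bar X^{\eta,xy}) + \tfrac{1}{2} \int_0^1 |\nu_s|^2\, ds] \leq M_F$, so for any fixed $\varepsilon > 0$ every $\varepsilon$-optimal control $\nu^\eta \in \calA$ (which exists, the infimum being finite) satisfies $\bE[\tfrac{1}{2} \int_0^1 |\nu^\eta_s|^2\, ds] \leq 2M_F + \varepsilon$, with a constant that does not depend on $\eta$.

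The second step truncates such a near-optimal $\nu^\eta$. For a level $M$, set $\tau_M \coloneq \inf\{t \in [0,1] : \int_0^t |\nu^\eta_s|^2\, ds \geq M\} \wedge 1$ and $\nu^{\eta,M}_t \coloneq \nu^\eta_t \mathbbm{1}_{\{t < \tau_M\}}$. Then $\nu^{\eta,M}$ is still $\calG_t$-progressively measurable, satisfies $\int_0^1 |\nu^{\eta,M}_s|^2\, ds \leq M$ almost surely so that $\nu^{\eta,M} \in \calA_{b,M}$, and its energy never exceeds that of $\nu^\eta$. By Chebyshev's inequality and the bound from the previous paragraph, $\bE[\mathbbm{1}_{\{\tau_M < 1\}}] \leq \tfrac{1}{M} \bE[\int_0^1 |\nu^\eta_s|^2\, ds] \leq 2(2M_F + \varepsilon)/M$, which is small uniformly in $\eta$ once $M$ is large.

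The third step compares the two controlled bridges and chains the estimates. On $\{\tau_M = 1\}$ the controls coincide, so by strong uniqueness for \eqref{eq:controlled-BB} on $[0,1)$, inherited from $X^{\eta,x}$ under Assumption \ref{ass:b-sigma-local-properties} (see the discussion preceding Proposition \ref{prop:representation1}), together with continuity up to $t=1$, the solution $\bar X^{\eta,xy}$ driven by $\nu^\eta$ and the solution $\bar X^{\eta,xy,M}$ driven by $\nu^{\eta,M}$ agree as elements of $\calC_1$; on the complement one uses only $|F| \leq M_F$. Combining this with the energy comparison and the $\varepsilon$-optimality of $\nu^\eta$ yields
\begin{align*}
    \inf_{\nu \in \calA_{b,M}} \bE\left[F(\bar X^{\eta,xy}) + \tfrac{1}{2} \int_0^1 |\nu_s|^2\, ds\right]
    &\leq \bE\left[F(\bar X^{\eta,xy,M}) + \tfrac{1}{2} \int_0^1 |\nu^{\eta,M}_s|^2\, ds\right] \\
    &\leq \bE\left[F(\bar X^{\eta,xy}) + \tfrac{1}{2} \int_0^1 |\nu^\eta_s|^2\, ds\right] + 2M_F\, \bE\big[\mathbbm{1}_{\{\tau_M < 1\}}\big] \\
    &\leq -\eta \log \bE\left[\exp\big\{-\tfrac{1}{\eta} F(X^{\eta,xy})\big\}\right] + \varepsilon + \frac{4M_F(2M_F + \varepsilon)}{M}.
\end{align*}
Taking $\varepsilon = \delta/2$ and then $M = M(M_F,\delta)$ large enough that $4M_F(2M_F + \delta/2)/M \leq \delta/2$ gives \eqref{eq:representationM} for every $\eta \in (0,1)$, with $M$ depending only on $\norm{F}_\infty$ and $\delta$, as claimed.

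The main obstacle I anticipate is the pathwise comparison of the two controlled SDEs near the terminal time — the reason this is not a verbatim instance of \cite[Theorem 3.17]{BD19} — since the $h$-transform drift $g^y_\eta$ in \eqref{eq:controlled-BB} is only locally Lipschitz on $[0,1)$ and is singular as $t \uparrow 1$, the obstruction already flagged after \eqref{eq:reverse-xy}. I would circumvent this by not comparing at $t = 1$ directly: work on $[0,1-\rho]$, where \cite[Theorem 3.17]{BD19} applies to the SDE and is transported to the bridge by the $h$-transform, and then let $\rho \downarrow 0$, using continuity of $F$ on $\calC_1$, the fact that under any finite-energy control $\bar X^{\eta,xy}$ is (via Girsanov) mutually absolutely continuous with the bridge $X^{\eta,xy}$, which does extend continuously to $y$ at $t=1$, and tightness of the controlled bridges as $\rho \downarrow 0$; reversing time via \eqref{eq:reverse-xy}, which moves the singularity to $t=0$ and splices the forward and backward representations as in \cite{BD19}, is an equivalent route.
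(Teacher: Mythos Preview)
Your proposal is correct and is exactly the approach the paper takes: the paper gives no independent proof but simply records this as \cite[Theorem 3.17]{BD19} transported to the bridge SDE via the $h$-transform, and the stopping-time truncation argument you spell out is precisely the content of that theorem. Your final paragraph usefully flags and handles the $t \uparrow 1$ singularity of $g^y_\eta$, a point the paper leaves implicit in the phrase ``together with an application of the $h$-transform.''
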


Similar to the case of unconditioned small-noise diffusions, Proposition \ref{prop:BBrepM} plays a key role in establishing the Laplace principle for our bridge processes. 
The key point is that it allows us to work with sequences of pairs of controlled processes and controls $\{(\bar{X}^{\eta, xy}_t, \nu^\eta)\}_{\eta \in (0,1)}$ that are \emph{tight}. %
This is established in Section \ref{sec:tightness-of-controlled-via-Lipschitz} by proving that both marginal sequences, $\{\bar{X}^{\eta, xy}\}_{\eta}$ and $\{\nu^\eta\}_{\eta}$, are tight. The second marginal, $\{\nu^\eta\}_{\eta} \subseteq S_M$, is tight in the \emph{weak topology} of $\calL^2([0,1]:\bR^d)$, since, by the Banach-Alaoglu theorem, $S_M$ is compact.
Because $g_\eta^y$ is typically not uniformly Lipschitz, tightness of the first marginal is more difficult to show in the case considered here. 
To achieve this, we add a few regularity assumptions regarding the bridge dynamics. %
Next, in Section \ref{sec:convergence-to-controlled-ode} we characterize the limit points of any convergent (sub)sequences of $\{(\bar{X}^{\eta, xy}_t, \nu^\eta)\}_{\eta \in (0,1)}$ in terms of a limit ODE. We then prove Theorem \ref{thm:LDP_bridge} in Section \ref{sec:LaplaceBridge} by proving the corresponding upper and lower Laplace bounds. Lastly, we obtain the uniform version of the Laplace princple, stated in Theorem \ref{thm:uniform-laplace-principle} and necessary for the extension of the LDP to dynamic Schr\"odinger bridges, in Section \ref{sec:uniform}.

\subsection{Tightness of controlled processes}\label{sec:tightness-of-controlled-via-Lipschitz}

To show the LDP with the weak convergence approach, we first need to verify that the sequence of controlled bridge processes $\{\bar{X}^{\eta, xy}\}_\eta$ is tight in $\calC_1$ (under $\bP$), for any $M \geq 0$, and any sequence of controls $\{\nu^\eta\}_{\eta} \subseteq \calA_{b, M}$. 
To achieve this we impose the following additional assumptions.

\begin{assumption}\label{ass:bounded-lipshitzness-of-log-gradients}
    For each $K \Subset D$ and $\delta > 0$, there exists a constant $L_{K, \delta}$ such that for $t - s \geq \delta, x, y \in K$, $\eta \sigma\sigma^\top(s, x) \nabla_x \log p_\eta(s, x; t, y)$ and %
    $\overset{\leftarrow}{g}{}^x_\eta(t, \hat{x})$, defined in \eqref{eq:g-backwards-def}, are Lipschitz in $x$ and $y$ respectively, with Lipschitz constant $L_{K, \delta}$. %
\end{assumption}

\begin{assumption}\label{ass:exp-boundedness}
    For each $K \Subset D$ and $M > 0$, there exists a $K_M \Subset D$ such that for any $(x,y) \in K \times K$, it holds that \[\limsup_{\eta \downarrow 0} \sup_{(x,y) \in K \times K} \eta \log \bP(\tau^{\eta, xy}_{K_M} \neq \infty) \leq -M,\] %
    where $\tau^{\eta, xy}_{K_M} \coloneq \inf \{t \in [0, 1]: X^{\eta, xy}_t \notin K_M\}$. %
\end{assumption}
One way to obtain Assumption \ref{ass:exp-boundedness} is when the underlying unconditioned process satisfy an LDP of Freidlin-Wentzell type combined with the additional regularity assumptions. Conditioning on the start and end points within a compact set then preserves the exponential tightness on $[0, 1-\delta]$, for any $\delta > 0$, and for $[1-\delta, 1]$ we can use time reversal. Alternatively, we can obtain Assumption \ref{ass:exp-boundedness} from assuming a Lyapunov-type condition for the unconditioned process. 

The next assumption may seem vacuous, and is indeed here mostly for presentational simplicity.

\begin{assumption}
    For all $x, y \in D$ we have that the $h$-transform backward and forward versions both satisfy $\bP(X^{\eta, xy}_0 = x) = \bP(X^{\eta, xy}_1 = y) = 1$.
\end{assumption}

\begin{assumption}\label{ass:log-gradient-convergence}
    The function $(t,x,y) \mapsto g^y_\eta(t, x) = \eta \sigma \sigma^\top \nabla_x \log p_\eta(s, x; 1, y)$ converges locally uniformly on compacts  in $[0, 1) \times K^2$, for every compact $K$, to a function $g^y(s, x)$ as $\eta \downarrow 0$.%
\end{assumption}

Consider the controlled bridge sequence $\{\bar{X}^{\eta, xy}_t\}$, where the controls are limited to $S_M$. 
Under the above assumptions, we get that, for all $\eta \in (0,1)$, and $x, y \in K$, using the Cauchy-Schwarz inequality like before,
\begin{equation}\label{eq:X_eta_xy_controlled_Cauchy_Schwarz}
\begin{split}
    &\bP\left(\sup_{|t-s| \leq \delta} |\bar{X}^{\eta, xy}_t - \bar{X}^{\eta, xy}_s| \geq \xi\right) \\
    &\qquad \leq \bE^{\bQ^\eta} \left[\frac{d\bP}{d\bQ^\eta} \mathbbm{1}\{\sup_{|t-s| \leq \delta} |\bar{X}^{\eta, xy}_t - \bar{X}^{\eta, xy}_s| \geq \xi\}\right] \\
    &\qquad \leq \bE^{\bQ^\eta}\left[\left(\frac{d\bP}{d\bQ^\eta}\right)^2\right]^{\frac{1}{2}} \bQ^\eta\left(\sup_{|t-s| \leq \delta} |\bar{X}^{\eta, xy}_t - \bar{X}^{\eta, xy}_s| \geq \xi\right) \\
    &\qquad \leq e^{\frac{M}{\eta}} \bP\left(\sup_{|t-s| \leq \delta} |X^{\eta, xy}_t - X^{\eta, xy}_s| \geq \xi\right).
\end{split}
\end{equation}
Now, based on $K$, take $K_{M + 1}$ according to Assumption \ref{ass:exp-boundedness}.
Then, 
\begin{equation}\label{eq:X-eta-xy-moc-split-exponential-events}
\begin{split}
    &\bP\left(\sup_{|t-s| \leq \delta} |X^{\eta, xy}_t - X^{\eta, xy}_s| \geq \xi\right) \\
    &\qquad \leq \bP\left(\tau^{\eta, xy}_{K_{M+1}} \neq \infty \right) + \bP\left(\sup_{|t-s| \leq \delta} |X^{\eta, xy}_t - X^{\eta, xy}_s| \geq \xi, \tau^{\eta, xy}_{K_{M+1}} = \infty \right) \\
    &\qquad \leq e^{-\frac{M + 1}{\eta}} + \bP\left(\sup_{|t-s| \leq \delta} |X^{\eta, xy}_t - X^{\eta, xy}_s| \geq \xi, \tau^{\eta, xy}_{K_{M+1}} = \infty \right).
\end{split}
\end{equation}
First consider the process $X^{\eta, xy}$ restricted to $t \leq \frac{2}{3}$.
For all $\omega \in \{\tau^{\eta, xy}_{K_{M+1}} = \infty\}$, we have that $b(t, X^{\eta, xy}_t)$ and $\eta\sigma\sigma^\top \nabla_x \log p_\eta(t, X^{\eta, xy}_t; 1, y)$ are bounded by Assumption \ref{ass:bounded-lipshitzness-of-log-gradients}, so
\begin{equation*}
    \int_0^{\cdot} (b(u, X^{\eta, xy}_u) + \eta \sigma\sigma^\top \nabla_x \log p_\eta(u, X^{\eta, xy}_u; 1, y)) \, du
\end{equation*}
is uniformly absolutely continuous on $[0, \frac{2}{3}]$ over such $\omega$.
Thus, for small $\delta$, and all such $\omega$, 
\begin{equation*}
    \int_{s}^{t} (b(u, X^{\eta, xy}_u) + \eta \sigma\sigma^\top \nabla_x \log p_\eta(u, X^{\eta, xy}_u; 1, y)) \, du \leq \frac{\xi}{2}
\end{equation*}
whenever $t - s \leq \delta$. 
Thus, for small enough $\delta$,
\begin{equation}\label{eq:bound-move-tau-into-itointegral}
\begin{split}
    \bP&\left(\sup_{\substack{s,t \in [0, 2/3] \\ |s-t| \leq \delta}} |X^{\eta, xy}_t - X^{\eta, xy}_s| \geq \xi, \tau^{\eta, xy}_{K_{M+1}} = \infty \right) \\
    &\leq \bP\left(\sup_{\substack{s,t \in [0, 2/3] \\ |s-t| \leq \delta}} \left|\int_{s}^{t} \sqrt{\eta} \sigma(u, X^{\eta, xy}_u) dW_u\right| \geq \frac{\xi}{2}, \tau^{\eta, xy}_{K_{M+1}} = \infty \right) \\
    &= \bP\left(\sup_{\substack{s,t \in [0, 2/3] \\ |s-t| \leq \delta}} \left|\int_{s}^{t} \sqrt{\eta} \sigma(u, X^{\eta, xy}_{u \land \tau^{\eta, xy}_{K_{M+1}}}) dW_u\right| \geq \frac{\xi}{2}, \tau^{\eta, xy}_{K_{M+1}} = \infty \right) \\
    &\leq \bP\left(\sup_{\substack{s,t \in [0, 2/3] \\ |s-t| \leq \delta}} \left|\int_{s}^{t} \sqrt{\eta} \sigma(u, X^{\eta, xy}_{u \land \tau^{\eta, xy}_{K_{M+1}}}) dW_u\right| \geq \frac{\xi}{2} \right).
\end{split}
\end{equation}

The last expression in \eqref{eq:bound-move-tau-into-itointegral} may be further bounded by noting its bounded quadratic variation:
Since $X^{\eta, xy}_{t \land \tau^{\eta, xy}_{M+1}} \in K_{M+1}$ for all $t \in [0,1]$, its norm is bounded as $|X^{\eta, xy}_{t \land \tau^{\eta, xy}_{M+1}}| \leq \sup_{x \in K_{M+1}} |x|$ , and by the linear growth (with constant $L$) of $\sigma(t,x)$, we get $\norm{\sigma(t, X^{\eta, xy}_{t \land \tau^{\eta, xy}_{M+1}})} \leq L (1 + \sup_{x \in K_{M+1}} |x|)$.
This may be leveraged in the following classical estimate for a one-dimensional continuous local martingale $V$ started from the origin \cite{dung2023some, chafai2020subgaussian_blog}:
\begin{equation}\label{eq:classical-estimate}
    \bP(\sup_{s \in [0, t]} V_s \geq \xi, \langle V \rangle_t \leq c) \leq e^{-\frac{\xi^2}{2 c}},
\end{equation}
If $V$ is instead $d$-dimensional and $\langle V^{(i)} \rangle_t \leq c, 1 \leq i \leq d$ holds a.s., \eqref{eq:classical-estimate} takes the following version by a union bound:
\begin{equation}\label{eq:classical-estimate-d}
    \bP(\sup_{s \in [0, t]} |V_s| \geq \xi) \leq 2d e^{-\frac{(\xi/\sqrt{d})^2}{2 c}}.
\end{equation}

To apply the quadratic variation bound and \eqref{eq:classical-estimate-d} to \eqref{eq:mesh}, consider a sequence of meshes $\{t^{(\delta)}_i\}_{i=0}^{N^{(\delta)}}$, one for each $\delta$, with $t^{(\delta)}_0 = 0$, $t^{(\delta)}_{N^{(\delta)}} = 2/3$, and $t^{(\delta)}_i < t^{(\delta)}_{i+1}$ for all $i$, such that $\delta \leq t^{(\delta)}_{i+1} - t^{(\delta)}_i \leq 2 \delta$.
Then, since any subinterval $[s, t] \subseteq [0, 2/3]$ shorter than $\delta$ intersects at most two intervals from the mesh, we have
\begin{equation}\label{eq:mesh}
\begin{split}
    \bP&\left(\sup_{\substack{s,t \in [0, 2/3] \\ |s-t| \leq \delta}} \left|\int_{s}^{t} \sqrt{\eta} \sigma(u, X^{\eta, xy}_{u \land \tau^{\eta, xy}_{K_{M+1}}}) dW_u\right| \geq \frac{\xi}{2} \right) \\
    &\leq \bP\left(\bigcup_{i=0}^{N^\delta - 1} \sup_{\substack{s,t \in [t^{(\delta)}_i, t^{(\delta)}_{i+1}]}} \left|\int_{s}^{t} \sqrt{\eta} \sigma(u, X^{\eta, xy}_{u \land \tau^{\eta, xy}_{K_{M+1}}}) dW_u\right| \geq \frac{\xi}{4} \right) \\
    &\leq \sum_{i=0}^{N^\delta - 1} \bP\left(\sup_{\substack{s,t \in [t^{(\delta)}_i, t^{(\delta)}_{i+1}] \\}} \left|\int_{s}^{t} \sqrt{\eta} \sigma(u, X^{\eta, xy}_{u \land \tau^{\eta, xy}_{K_{M+1}}}) dW_u\right| \geq \frac{\xi}{4} \right) \\
    &\leq \sum_{i=0}^{N^\delta - 1} \bP\left(\sup_{\substack{t \in [t^{(\delta)}_i, t^{(\delta)}_{i+1}]}} \left|\int_{t^{(\delta)}_i}^{t} \sqrt{\eta} \sigma(u, X^{\eta, xy}_{u \land \tau^{\eta, xy}_{K_{M+1}}}) dW_u\right| \geq \frac{\xi}{8} \right)
\end{split}
\end{equation}
Now, we can bound the quadratic variation on $[t^{(\delta)}_i, t^{(\delta)}_{i+1}]$ for each dimension $i$:
\begin{equation*}
\begin{split}
    \langle\int_{t^{(\delta)}_i}^{\cdot} \sqrt{\eta} \sigma_{i}(u, X^{\eta, xy}_{u \land \tau^{\eta, xy}_{K_{M+1}}}) dW_u\rangle_t &= \int_{t^{(\delta)}_i}^{t} \sum_{j=1}^d |\sqrt{\eta} \sigma_{ij}(u, X^{\eta, xy}_{u \land \tau^{\eta, xy}_{K_{M+1}}})|^2 du \\
    &\leq \eta L^2 (1 + \sup_{x \in K_{M+1}} |x|)^2 (t-t^{(\delta)}_i) \\
    &\leq \eta L^2 (1 + \sup_{x \in K_{M+1}} |x|)^2 2 \delta.
\end{split}
\end{equation*}
Thus, using \eqref{eq:classical-estimate-d}, the final expression in \eqref{eq:mesh} is bounded by
\begin{equation*}
\begin{split}
    &N^\delta \times 2 \exp\left({-\frac{(\frac{\xi / \sqrt{d}}{8})^2}{2 \eta L^2 (1 + \sup_{x \in K_{M+1}} |x|)^2 2 \delta}}\right) \\ 
    &\leq 4\delta^{-1} \exp\left({-\frac{\xi^2}{256 \eta d L^2 (1 + \sup_{x \in K_{M+1}} |x|)^2 \delta}}\right).
\end{split}
\end{equation*}

Therefore, selecting $\delta_\xi$ small enough, for all $\delta \leq \delta_\xi$, it is bounded by $e^{-\frac{M + 1}{\eta}}$.
When combining with \eqref{eq:X-eta-xy-moc-split-exponential-events}, we get
\begin{equation*}
\begin{split}
    \bP\left(\sup_{\substack{s,t \in [0, 2/3] \\ |s-t| \leq \delta}} |X^{\eta, xy}_t - X^{\eta, xy}_s| \geq \xi\right) \leq e^{-\frac{M + 1}{\eta}} + e^{-\frac{M + 1}{\eta}} = 2e^{-\frac{M + 1}{\eta}}.
\end{split}
\end{equation*}
\eqref{eq:X_eta_xy_controlled_Cauchy_Schwarz} now yields
\begin{equation*}
\begin{split}
    \bP\left(\sup_{\substack{s,t \in [0, 2/3] \\ |s-t| \leq \delta}} |\bar{X}^{\eta, xy}_t - \bar{X}^{\eta, xy}_s| \geq \xi\right) 
    &\leq e^{\frac{M}{\eta}} \bP\left(\sup_{|t-s| \leq \delta} |X^{\eta, xy}_t - X^{\eta, xy}_s| \geq \xi\right) \\
    &\leq e^{\frac{M}{\eta}} 2e^{-\frac{M + 1}{\eta}} = 2e^{-\frac{1}{\eta}}.
\end{split}
\end{equation*}
Then, for any decreasing subsequence $\{\eta_i\}_{i=1}^\infty \subseteq (0,1)$, and any $\varepsilon > 0$, take $n_{\xi, \varepsilon} \in \bN$ such that $2e^{-\frac{1}{\eta_{n_{\xi, \varepsilon}}}} \leq \varepsilon$.
Then, for any $i \geq n_{\xi, \varepsilon}$, \[\bP\left(\sup_{\substack{s,t \in [0, 2/3] \\ |s-t| \leq \delta}} |\bar{X}^{\eta_i, xy}_t - \bar{X}^{\eta_i, xy}_s| \geq \xi\right) \leq \varepsilon.\]
Now, set $\tilde{\delta}_{\xi, \varepsilon}$ small enough so that 
for $i < n_{\xi, \varepsilon}$, \[\bP\left(\sup_{\substack{s,t \in [0, 2/3] \\ |s-t| \leq \delta}} |\bar{X}^{\eta_i, xy}_t - \bar{X}^{\eta_i, xy}_s| \geq \xi\right) \leq \varepsilon\] for all $\delta \leq \tilde{\delta}_{\xi, \varepsilon}$.
With $\delta_{\xi, \varepsilon} \coloneq \delta_{\xi} \land \tilde{\delta}_{\xi, \varepsilon}$, we have then that for all $i \in \bN$ and $\delta \leq \delta_{\xi, \varepsilon}$,
\begin{equation*}
    \sup_{i \in \bN}\bP\left(\sup_{\substack{s,t \in [0, 2/3] \\ |s-t| \leq \delta}} |\bar{X}^{\eta_i, xy}_t - \bar{X}^{\eta_i, xy}_s| \geq \xi\right) \leq \varepsilon.
\end{equation*}
Since also $\bP\left(\bar{X}^{\eta_i, xy}_0 = x\right) = 1$, $\{\bar{X}^{\eta_i, xy}\}_{i=1}^\infty$ is tight.
Since the subsequence is arbitrary, so is $\{\bar{X}^{\eta, xy}\}_{\eta \in (0,1)}$.

\begin{remark}
    The above argument was restricted to $[0, 2/3]$.
    By reversing the process (as per \eqref{eq:reverse-xy}) and applying the same exponential arguments on $[0, 2/3]$ for the reversed bridges processes $\hat{X}^{\eta, xy}$, we get that 
    \begin{equation*}
        \lim_{\delta \downarrow 0} \sup_{i \in \bN}\bP\left(\sup_{\substack{s,t \in [1/3, 1] \\ |s-t| \leq \delta}} |\bar{X}^{\eta_i, xy}_t - \bar{X}^{\eta_i, xy}_s| \geq \xi\right) = 0,
    \end{equation*}
    implying tightness on $[1/3, 1]$.
    Combining the two, e.g. with a union bound argument, gives tightness on $[0,1]$.
\end{remark}

\subsection{Convergence to controlled ODE}\label{sec:convergence-to-controlled-ode}

The tightness established in Section \ref{sec:tightness-of-controlled-via-Lipschitz} implies that $(\bar{X}^{\eta, xy}, \nu^\eta)$ converges along some certain subsequences. The next step in establishing the Laplace principle in \ref{thm:LDP_bridge} is to characterize the limit points of such convergent subsequences; this is the analogue of Lemma 3.21 in \cite{BD19}. 

\begin{lemma}
    Assume that a controlled sequence $(\bar{X}^{\eta, xy}, \nu^\eta)$ converges to $(\bar{X}^{xy}, \nu)$.
    Then this limit satisfies, almost surely, the following integral equation on $[0,1)$%
    \begin{equation}\label{eq:limit-ODE}
        \bar{X}^{xy}_t = x + \int_0^{t} (b(s, \bar{X}^{xy}_s) + g^y(s, \bar{X}^{xy}_s) + \sigma(s, \bar{X}^{xy}_s) \nu_s) ds.
    \end{equation}
\end{lemma}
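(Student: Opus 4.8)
The plan is to pass to the limit in the controlled SDE \eqref{eq:controlled-BB} along the convergent subsequence, separately handling the Itô (noise) term, which vanishes, and the drift, which converges to the integrand in \eqref{eq:limit-ODE}. I would fix a time horizon $[0, 1-\delta]$ with $\delta > 0$ and work there first, then let $\delta \downarrow 0$ to recover the statement on all of $[0,1)$. By the tightness established in Section \ref{sec:tightness-of-controlled-via-Lipschitz}, together with the Banach–Alaoglu weak-$L^2$ compactness of $S_M$, we may assume (by Skorokhod representation, on a common probability space) that $\bar{X}^{\eta, xy} \to \bar{X}^{xy}$ uniformly on $[0,1]$ almost surely and $\nu^\eta \to \nu$ weakly in $L^2([0,1]:\bR^d)$ almost surely. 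Since $\bar{X}^{xy}$ is a uniform limit of continuous paths started at $x$, it is continuous with $\bar{X}^{xy}_0 = x$; and since the $\bar X^{\eta,xy}$ stay in a compact $K_{M+1}$ with overwhelming probability (Assumption \ref{ass:exp-boundedness}), so does the limit, so all coefficients below are evaluated on a fixed compact set.

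The main steps. \emph{(i) The noise term vanishes.} Writing $\int_0^t \sqrt{\eta}\,\sigma(s, \bar X^{\eta,xy}_s)\,dW_s$ and using the local boundedness of $\sigma$ on $K_{M+1}$ together with Assumption \ref{ass:exp-boundedness} to control the exit event, the Burkholder–Davis–Gundy inequality (or the quadratic-variation estimate \eqref{eq:classical-estimate-d} already in the paper) gives that $\sup_{t \le 1-\delta} |\int_0^t \sqrt{\eta}\sigma(s,\bar X^{\eta,xy}_s)dW_s| \to 0$ in probability, hence a.s. along a further subsequence. \emph{(ii) The $b$ and $g$ drift terms converge.} On $[0,1-\delta]$, $g^y_\eta$ converges locally uniformly on compacts to $g^y$ by Assumption \ref{ass:log-gradient-convergence}, and is uniformly bounded on $K_{M+1}$ for $t \le 1-\delta$ by Assumption \ref{ass:bounded-lipshitzness-of-log-gradients}; combined with the a.s.\ uniform convergence $\bar X^{\eta,xy} \to \bar X^{xy}$ and continuity of $b$, dominated convergence yields $\int_0^t (b + g^y_\eta)(s, \bar X^{\eta,xy}_s)\,ds \to \int_0^t (b + g^y)(s, \bar X^{xy}_s)\,ds$. \emph{(iii) The controlled drift.} For the term $\int_0^t \sigma(s, \bar X^{\eta,xy}_s)\nu^\eta_s\,ds$ one writes it as $\int_0^t \sigma(s, \bar X^{xy}_s)\nu^\eta_s\,ds + \int_0^t [\sigma(s,\bar X^{\eta,xy}_s) - \sigma(s,\bar X^{xy}_s)]\nu^\eta_s\,ds$; the first converges to $\int_0^t \sigma(s,\bar X^{xy}_s)\nu_s\,ds$ because weak $L^2$ convergence of $\nu^\eta$ pairs against the fixed $L^2$ function $s \mapsto \mathbbm{1}_{[0,t]}(s)\sigma(s,\bar X^{xy}_s)$ (continuous, bounded on $K_{M+1}$), and the second is bounded in absolute value by $\sup_{s}\norm{\sigma(s,\bar X^{\eta,xy}_s) - \sigma(s,\bar X^{xy}_s)} \cdot (\int_0^1 |\nu^\eta_s|^2 ds)^{1/2} \le \sqrt{M}\,o(1)$ using uniform convergence of $\bar X^{\eta,xy}$, uniform continuity of $\sigma$ on the compact, and the energy bound $\nu^\eta \in S_M$. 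Assembling (i)–(iii) identifies the a.s.\ limit of \eqref{eq:controlled-BB} as \eqref{eq:limit-ODE} on $[0,1-\delta]$; since $\delta>0$ is arbitrary and all objects are continuous in $t$, the identity holds on $[0,1)$.

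The main obstacle I anticipate is step (iii), specifically the interaction between the weak-$L^2$ convergence of the controls and the (only) uniform convergence of the state: one cannot simply pass to the limit in the product $\sigma(s,\bar X^{\eta,xy}_s)\nu^\eta_s$ pointwise, so the split above (freeze the state in the $\sigma$-factor, then estimate the error) is essential, and it is what forces the use of the hard energy bound $\nu^\eta \in S_M$ from Proposition \ref{prop:BBrepM} rather than merely $\nu^\eta \in \calA$. A secondary technical point is that everything must be done on the truncated interval $[0,1-\delta]$ because $g^y_\eta$ blows up as $t \to 1$ (it is not uniformly bounded or Lipschitz near the terminal time, exactly the Brownian-bridge phenomenon flagged after \eqref{eq:reverse-xy}); the limit equation \eqref{eq:limit-ODE} is correspondingly only asserted on $[0,1)$, and no work near $t=1$ is needed here.
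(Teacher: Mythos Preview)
Your proposal is correct and follows essentially the same route as the paper: term-by-term passage to the limit in \eqref{eq:small-noise-converging-terms} on $[0,1-\delta]$, localizing to a compact via Assumption~\ref{ass:exp-boundedness}, using Assumption~\ref{ass:log-gradient-convergence} plus Lipschitz control for the $g$-term, and Burkholder--Davis--Gundy on the stopped process for the It\^o term. In fact you give more detail than the paper for the controlled-drift term (your step~(iii)), which the paper simply defers to the corresponding argument in \cite{BD19}; your freeze-the-state split and use of the hard energy bound $\nu^\eta \in S_M$ is exactly what is needed there.
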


\begin{proof}
    We want to see that, term by term, the right-hand side of the equation
    \begin{equation}\label{eq:small-noise-converging-terms}
    \begin{split}
        \bar{X}^{\eta, xy}_t - x = \int_0^{t} b(s, \bar{X}^{\eta, xy}_s) ds + \int_0^{t} \eta \sigma \sigma^\top(s, \bar{X}^{\eta, xy}_s) \nabla_x \log p_\eta(s, \bar{X}^{\eta, xy}_s; 1, y) ds \\+ \int_0^{t} \sigma(s, \bar{X}^{\eta, xy}_s) \nu^\eta_s ds + \int_0^{t} \sqrt{\eta} \sigma(s, \bar{X}^{\eta, xy}_s) dW_s,
    \end{split}
    \end{equation}
    converges to the corresponding integral term in \eqref{eq:limit-ODE} for each $t \in [0, 1)$.
    The first and third terms go to $\int_0^{t} b(s, \bar{X}^{xy}_s) ds$ and $\int_0^{t} \sigma(s, \bar{X}^{xy}_s) \nu_s ds$ respectively, by the same arguments as in \cite{BD19} localized to some compact $K_\alpha$ with $x, y \in K$, $\alpha > 0$, and $\limsup_{\eta \downarrow 0} \eta \log \bP(\tau^{\eta, xy}_{K_\alpha} \neq \infty) \leq -\alpha < 0$.
    For the second term, denoting $g_\eta^y(s, x) \coloneq \eta \sigma \sigma^\top(s, x) \nabla_x \log p_\eta(s, x; 1, y)$, we have for $\omega \in \{\tau^{\eta, xy}_{K_\alpha} = \infty\}$,
    \begin{equation*}
    \begin{split}
        \Big|\int_0^{t} &g_\eta^y(s, \bar{X}^{\eta, xy}_s) ds - \int_0^{t} g^y(s, \bar{X}^{xy}_s) ds\Big| \\
        &\leq \int_0^{t} \Big| g_\eta^y(s, \bar{X}^{\eta, xy}_s) ds - g^y(s, \bar{X}^{xy}_s) \Big| ds \\
        &\leq \int_0^{t} \Big| g_\eta^y(s, \bar{X}^{\eta, xy}_s) ds - g^y(s, \bar{X}^{\eta, xy}_s) \Big| ds + \int_0^{t} \Big| g^y(s, \bar{X}^{\eta, xy}_s) ds - g^y(s, \bar{X}^{xy}_s) \Big| ds \\
        &\leq \int_0^{t} |g_\eta^y - g^y|_{\infty, [0,t] \times K_\alpha} ds + \int_0^{t} L_{K_\alpha, 1 - t} \Big| \bar{X}^{\eta, xy}_s - \bar{X}^{xy}_s \Big| ds, \\
    \end{split}
    \end{equation*}
    which goes to $0$ as $\eta \downarrow 0$ for any $t < 1$.
    Further the probability of the complement, $\{\tau^{\eta, xy}_{K_\alpha} \neq \infty\}$ goes to $0$, and thus $\int_0^{t} g_\eta^y(s, \bar{X}^{\eta, xy}_s) ds \to \int_0^{t} g^y(s, \bar{X}^{xy}_s) ds$ weakly.
    For the stochastic integral term, consider the stopped process $\{\bar{X}^{\eta, xy}_{t \land \tau^{\eta, xy}_{K_\alpha}}\}$.
    By the same arguments as in Section \ref{sec:tightness-of-controlled-via-Lipschitz}, its It\^o integral agrees with that of $\{\bar{X}^{\eta, xy}_{t}\}$ on $\{\tau^{\eta, xy}_{K_\alpha} = \infty\}$.
    Using the Burkholder-Davis-Gundy inequality and the linear growth property of $\sigma$, we have for some constant $C$:
    \begin{equation*}
        \bE \sup_{t \in [0,1]} \left|\sqrt{\eta}\int_0^t \sigma(s, \bar{X}^{\eta, xy}_{s \land \tau^{\eta, xy}_{K_\alpha}}) dW_s\right|^2 \leq C \eta \int_0^1 \bE [1 + |\bar{X}^{\eta, xy}_{s \land \tau^{\eta, xy}_{K_\alpha}}|^2] ds.
    \end{equation*}
    By the compactness of $K_\alpha$, this is bounded and goes to $0$ in the limit as $\eta \to 0$. %
    As before, the probability of the complement $\{\tau^{\eta, xy}_{K_\alpha} \neq \infty\}$ also goes to $0$, so $\sqrt{\eta} \int_0^{\cdot} \sigma(s, \bar{X}^{\eta, xy}_{s}) dW_s \to 0$ weakly in $\calC_1$.
    
\end{proof}

\subsection{Laplace principle for $\{X ^{\eta, xy}\} _{\eta}$}
\label{sec:LaplaceBridge}

With the tightness and convergence behavior in place, we can proceed to prove Theorem \ref{thm:LDP_bridge}. We do this by proving the corresponding Laplace upper and lower bounds, using arguments completely analogous to those in \cite{BD19}; we include the proofs for completeness.%

\begin{proposition}[Laplace upper bound]
\label{prop:upper}
Suppose Assumptions \ref{ass:b-sigma-local-properties}-\ref{ass:log-gradient-convergence} hold. Then, for any bounded, continuous $F$,
\begin{align}
\label{eq:Laplace-upper-bound}
\liminf _{\eta \to 0} -\eta \log \mathbb{E} \left[ \exp\left\{ -\frac{1}{\eta} F(X^{\eta, xy}) \right\} \right] \geq  \inf _{\varphi \in \calC _1} \left[ F(\varphi) + \inf _{\calU _\varphi} \frac{1}{2} \int _0 ^1 | \nu _s | ^2 ds  \right].
\end{align}
\end{proposition}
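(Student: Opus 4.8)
\textbf{Proof plan for the Laplace upper bound (Proposition \ref{prop:upper}).}

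The plan is to follow the standard weak convergence recipe. First, I would fix $\varepsilon > 0$ and, using the variational representation \eqref{eq:representation} from Proposition \ref{prop:representation1}, pick near-optimal controls $\nu^\eta \in \calA$ so that
\[
-\eta \log \bE\left[\exp\left\{-\tfrac{1}{\eta} F(X^{\eta,xy})\right\}\right] \geq \bE\left[F(\bar{X}^{\eta,xy}) + \tfrac{1}{2}\int_0^1 |\nu^\eta_s|^2 \, ds\right] - \varepsilon.
\]
Since $F$ is bounded, the left-hand side is bounded above uniformly in $\eta$, so I may assume $\sup_\eta \bE[\int_0^1 |\nu^\eta_s|^2 ds] < \infty$; invoking Proposition \ref{prop:BBrepM} I can moreover reduce to controls in $\calA_{b,M}$ for a suitable $M$, at the cost of an additional $\delta$. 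Then the pair $(\bar{X}^{\eta,xy}, \nu^\eta)$ is tight in $\calC_1 \times (S_M, \text{weak})$ by Section \ref{sec:tightness-of-controlled-via-Lipschitz} together with Banach--Alaoglu.

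Next, along any subsequence I would extract a further subsequence on which $(\bar{X}^{\eta,xy}, \nu^\eta) \Rightarrow (\bar{X}^{xy}, \nu)$, invoke Skorokhod representation to get almost sure convergence on a common probability space, and apply the Lemma of Section \ref{sec:convergence-to-controlled-ode} to conclude that the limit satisfies the controlled ODE \eqref{eq:limit-ODE} on $[0,1)$, so that $\nu \in \calU_{\bar{X}^{xy}}$ (after noting that $\bar{X}^{xy}$ extends continuously to $t=1$ with the correct endpoint — this is where the reversed-process tightness from the Remark is used). Then, since $F$ is bounded and continuous and $v \mapsto \tfrac{1}{2}\int_0^1 |v_s|^2 ds$ is lower semicontinuous in the weak $L^2$ topology, Fatou's lemma gives
\[
\liminf_{\eta \to 0} \bE\left[F(\bar{X}^{\eta,xy}) + \tfrac{1}{2}\int_0^1 |\nu^\eta_s|^2 ds\right] \geq \bE\left[F(\bar{X}^{xy}) + \tfrac{1}{2}\int_0^1 |\nu_s|^2 ds\right] \geq \bE\left[\inf_{\varphi \in \calC_1}\Big(F(\varphi) + \inf_{\calU_\varphi}\tfrac{1}{2}\int_0^1 |\nu_s|^2 ds\Big)\right],
\]
and the last expectation is just the deterministic infimum. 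Combining with the $\varepsilon, \delta$ from the first step and letting them vanish yields the claim; the subsequence argument upgrades this to a genuine $\liminf$.

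The main obstacle, as flagged in the excerpt, is handling the singularity of the $h$-transform drift $g^y_\eta$ as $t \uparrow 1$: the convergence Lemma only controls things on $[0,1)$, so I must argue separately that no mass escapes near the terminal time and that the limit $\bar{X}^{xy}$ genuinely hits $y$ at $t=1$. My strategy there is to use the time-reversed representation \eqref{eq:reverse-xy} and the symmetric tightness estimate from the Remark, so that the limiting process is controlled on $[0, 2/3]$ in reversed time (i.e.\ on $[1/3,1]$ forward), which pins down the endpoint and ensures $\nu \in L^2([0,1])$ with the integral identity holding on all of $[0,1)$ and extending by continuity. A secondary technical point is the weak (rather than uniform-on-compacts with uniformly bounded drifts) convergence of the $g^y_\eta$ term, which is dealt with by localizing to the stopping-time events $\{\tau^{\eta,xy}_{K_\alpha} = \infty\}$ exactly as in the convergence Lemma and using Assumptions \ref{ass:bounded-lipshitzness-of-log-gradients}, \ref{ass:exp-boundedness}, and \ref{ass:log-gradient-convergence}.
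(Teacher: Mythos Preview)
Your proposal is correct and follows essentially the same route as the paper: both invoke the $M/\delta$ representation of Proposition \ref{prop:BBrepM}, use the tightness established in Section \ref{sec:tightness-of-controlled-via-Lipschitz} to extract a convergent subsequence, apply the limit-ODE lemma of Section \ref{sec:convergence-to-controlled-ode}, and then run the standard chain $\liminf \bE[\ldots] \geq \bE[F(\bar X^{xy}) + \tfrac12\int|\nu|^2] \geq \bE[\inf_\varphi(\ldots)] = \inf_\varphi(\ldots)$ via Fatou and weak-$L^2$ lower semicontinuity before sending $\delta \downarrow 0$. If anything, you are more explicit than the paper about the Skorokhod step and about pinning down the endpoint $\bar X^{xy}_1 = y$ via the reversed-process argument; the paper's proof is terser on these points and simply cites the preceding results.
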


\begin{proposition}[Laplace lower bound]
\label{prop:lower}
Suppose Assumptions \ref{ass:b-sigma-local-properties}-\ref{ass:log-gradient-convergence} hold. Then, for any bounded, continuous $F$,
\begin{align}
\label{eq:Laplace-lower-bound}
\limsup _{\eta \to 0} -\eta \log \mathbb{E} \left[ \exp\left\{ -\frac{1}{\eta} F(X^{\eta, xy}) \right\} \right] \leq  \inf _{\varphi \in \calC _1} \left[ F(\varphi) + \inf _{\calU _\varphi} \frac{1}{2} \int _0 ^1 | \nu _s | ^2 ds  \right].
\end{align}
\end{proposition}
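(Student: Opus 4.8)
The plan is to follow the standard weak-convergence recipe for Laplace lower bounds: produce a near-optimal \emph{deterministic} control, substitute it into the variational representation of Proposition~\ref{prop:representation1}, and pass to the limit using the tightness from Section~\ref{sec:tightness-of-controlled-via-Lipschitz} together with the limit-ODE characterization from Section~\ref{sec:convergence-to-controlled-ode}.

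Concretely, I would fix $\varepsilon>0$; if the right-hand side of \eqref{eq:Laplace-lower-bound} is $+\infty$ there is nothing to prove, so assume it is finite and choose $\varphi^{*}\in\calC_{1}$ and $\nu^{*}\in\calU_{\varphi^{*}}$ with
\[ F(\varphi^{*})+\tfrac12\int_{0}^{1}|\nu^{*}_{s}|^{2}\,ds \;\le\; \inf_{\varphi\in\calC_{1}}\Bigl[\,F(\varphi)+\inf_{\calU_{\varphi}}\tfrac12\int_{0}^{1}|\nu_{s}|^{2}\,ds\,\Bigr]+\varepsilon. \]
Since $\nu^{*}\in L^{2}$, it lies in $S_{M}$ for $M$ large, so the constant-in-$\omega$ control $\nu^{\eta}\equiv\nu^{*}$ belongs to $\calA$ (indeed to $\calA_{b,M}$). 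Feeding it into \eqref{eq:representation} and bounding the infimum from above by this one choice gives, for every $\eta\in(0,1)$,
\[ -\eta\log\bE\!\left[\exp\!\left\{-\tfrac1\eta F(X^{\eta,xy})\right\}\right] \;\le\; \bE\!\left[F(\bar{X}^{\eta,xy,*})\right]+\tfrac12\int_{0}^{1}|\nu^{*}_{s}|^{2}\,ds, \]
where $\bar{X}^{\eta,xy,*}$ solves the controlled bridge equation \eqref{eq:controlled-BB} driven by $\nu^{*}$. The remaining task is to show $\bar{X}^{\eta,xy,*}\to\varphi^{*}$ in $\calC_{1}$: then, since $F$ is bounded and continuous, $\bE[F(\bar{X}^{\eta,xy,*})]\to F(\varphi^{*})$, so the $\limsup$ of the left-hand side is at most $F(\varphi^{*})+\tfrac12\int_{0}^{1}|\nu^{*}_{s}|^{2}\,ds$, and letting $\varepsilon\downarrow0$ finishes the argument.

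For the convergence of the controlled processes I would argue as follows. The tightness estimate of Section~\ref{sec:tightness-of-controlled-via-Lipschitz} applies to an arbitrary control sequence in $\calA_{b,M}$, in particular the constant one, so $\{\bar{X}^{\eta,xy,*}\}_{\eta}$ is tight in $\calC_{1}$; hence every subsequence has a further subsequence converging in distribution to some $\bar{X}^{xy}$. By the lemma of Section~\ref{sec:convergence-to-controlled-ode}, applied with $\nu^{\eta}\equiv\nu^{*}$, the limit $\bar{X}^{xy}$ satisfies almost surely the integral equation \eqref{eq:limit-ODE} with control $\nu^{*}$ and initial point $x$ on $[0,1)$; but $\varphi^{*}$ solves the same equation since $\nu^{*}\in\calU_{\varphi^{*}}$. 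As $b$, $\sigma$ and, by Assumptions~\ref{ass:bounded-lipshitzness-of-log-gradients} and \ref{ass:log-gradient-convergence}, the limiting drift $g^{y}$ are Lipschitz on compact subsets of $D\times[0,1-\delta]$ for every $\delta>0$, Carath\'eodory uniqueness forces $\bar{X}^{xy}=\varphi^{*}$ on $[0,1)$, hence on $[0,1]$ by continuity. Since every subsequential limit equals the deterministic path $\varphi^{*}$, the whole sequence converges to $\varphi^{*}$ in $\calC_{1}$, and convergence in distribution to a constant is convergence in probability.

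The hard part is the singular endpoint $t=1$, where $g^{y}_{\eta}$ is not controlled by Assumption~\ref{ass:bounded-lipshitzness-of-log-gradients}. Two issues arise there. First, the tightness input has to hold on $[1-\delta,1]$: the time-reversal estimate in the Remark after Section~\ref{sec:tightness-of-controlled-via-Lipschitz} provides this, and it is cleanest to take $\nu^{*}$ vanishing near $t=1$ (an $L^{2}$-density reduction costing only another $\varepsilon$), so that there $\bar{X}^{\eta,xy,*}$ coincides with $X^{\eta,xy}$ itself. Second, one must verify $\varphi^{*}(1)=y$; this is forced because the $h$-transform term pins $\bar{X}^{\eta,xy,*}_{1}=y$ almost surely, so the common $\calC_{1}$-limit inherits the endpoint, which is also the only regime in which $\calU_{\varphi^{*}}$ is nonempty. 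Apart from these endpoint technicalities the proof is the one-to-one analogue of the Laplace lower bound in \cite[Section~3.2]{BD19}.
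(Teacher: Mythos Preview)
Your proposal is correct and follows essentially the same route as the paper's proof: both pick a near-optimal deterministic control $\nu^{*}$ (the paper calls it $u$), feed it into the variational representation to bound the infimum from above, and then use tightness together with the limit-ODE lemma of Section~\ref{sec:convergence-to-controlled-ode} to conclude that the controlled processes converge to $\varphi^{*}$. Your discussion of Carath\'eodory uniqueness and the endpoint $t=1$ is more explicit than the paper's terse ``By Section~\ref{sec:convergence-to-controlled-ode}, $(\tilde X^{\eta,xy},u)$ converges to $(\varphi^{*},u)$'', but the underlying argument is identical.
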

The combination of the two gives the Laplace principle of Theorem \ref{thm:LDP_bridge}: for bounded, continuous $F$,
\begin{equation}\label{eq:Laplace-principle}
    \lim_{\eta \downarrow 0} -\eta \log \bE\left[\exp{-\frac{1}{\eta}F(X^{\eta, xy})}\right] = \inf_{\varphi \in C([0,1]:\mathbb{R}^d)} \left[F(\varphi) + \inf_{\nu \in U^{xy}_\varphi} \frac{1}{2}\int_0^1|\nu_s|^2ds\right].
\end{equation}
This establishes the claimed LDP with rate function given by
\begin{equation*}
    I_D^{xy}(\varphi) = I^{xy}(\varphi) \coloneq \inf_{\nu \in U^{xy}_\varphi} \frac{1}{2}\int_0^1|\nu_s|^2ds.
\end{equation*}
It is also straightforward to see that $I^{xy}$ is a good rate function, that is it has compact level sets, using arguments similar to those in sections \ref{sec:convergence-to-controlled-ode} and the proof of Proposition \ref{prop:lower}; see \cite[Section 3.2.4]{BD19}. 

We end this section with proofs of the two Laplace bounds. Recall from Proposition \ref{prop:BBrepM} that we have the $M$/$\delta$ version of the representational formula \eqref{eq:representation}: for every $\delta > 0$ there is a number $M < \infty$ and controls $\{\nu^\eta\}_{\eta \in (0,1)}$, that all spend less than $M$ energy, such that for every $\eta \in (0,1)$,
\begin{equation}\label{eq:representation-M-delta}
    -\eta \log \bE \exp \left\{-\frac{1}{\eta} F(X^{\eta, xy})\right\} + \delta \geq \bE \left[F(\bar{X}^{\eta, xy}) + \frac{1}{2}\int_0^1 | \nu^\eta_s |^2 ds \right].
\end{equation}
Here, $\bar{X}^{\eta, xy}$ follows the controlled SDE
\begin{equation*}
    d\bar{X}^{\eta, xy}_t = (b + g_\eta^y)(t, \bar{X}^{\eta, xy}_t) dt + \sqrt{\eta} \sigma(t, \bar{X}^{\eta, xy}_t) dW_t + \sigma(t, \bar{X}^{\eta, xy}_t) \nu^\eta_t dt, \ \ \bar{X}^{\eta, xy}_0 = x.
\end{equation*}
We have shown that if $(\bar{X}^{\eta, xy}, \nu^\eta)$ converges weakly, as $\eta \downarrow 0$, to $(\bar{X}^{xy}, \nu)$, then $(\bar{X}^{xy}, \nu)$ follows 
\begin{equation}\label{eq:noise-free-controlled-BB-00}
    d\bar{X}^{xy}_t = (b + g^y)(t, \bar{X}^{xy}_t) dt + \sigma(t, \bar{X}^{xy}_t) \nu_t dt, \ \ \bar{X}^{ xy}_0 = x.
\end{equation}
We now use these facts to show the Laplace principle upper and lower bounds.

\begin{proof}[Proof of Laplace upper bound]

Just like in \cite{BD19}, we have merely shown that $(\bar{X}^{\eta, xy}, \nu^\eta)_{\eta}$ is tight, not that it converges.
The limit behavior above still applies to any convergent subsequence.
For any subsequence of $(\bar{X}^{\eta, xy}, \nu^\eta)_{\eta}$ such that the $\liminf$ on the right-hand side in \eqref{eq:Laplace-upper-bound} is a limit, take a further convergent (to $(\bar{X}^{xy}, \nu)$) subsequence.
Then we have, denoting all subsequences by $\eta$,

\begin{equation*}%
\begin{split}
    &\liminf_{\eta \downarrow 0} -\eta \log \bE \exp \left\{-\frac{1}{\eta} F(X^{\eta, xy})\right\} + 2\delta \\
    &\qquad \geq \liminf_{\eta \downarrow 0} \bE \left[F(\bar{X}^{\eta, xy}) + \frac{1}{2}\int_0^1 | \nu^\eta_s |^2 ds \right] \\
    \quad &\qquad \geq \bE \left[F(\bar{X}^{xy}) + \frac{1}{2}\int_0^1 | \nu_s |^2 ds \right] \\
    &\qquad \geq \bE \left[F(\bar{X}^{xy}) + \inf_{\nu \in U_{\bar{X}^{xy}}} \frac{1}{2}\int_0^1 | \nu_s |^2 ds \right] \\
    &\qquad \geq \inf_{\varphi \in C([0,1]: \mathbb{R}^d)} \Big[F(\varphi) + \inf_{\nu \in U_{\varphi}} \frac{1}{2}\int_0^1 | \nu_s |^2 ds\Big].
\end{split}
\end{equation*}

\noindent Since $\delta$ is arbitrarily small, the Laplace upper bound \eqref{eq:Laplace-upper-bound} follows.
\end{proof}

\begin{proof}[Proof of Laplace lower bound]
Choose $\varphi^*$ such that $F(\varphi^*) + I(\varphi^*) \leq \inf_{\varphi} F(\varphi) + I(\varphi) + \delta$. 
Also, choose $u \in U_{\varphi^*}$ such that $\frac{1}{2}\int_0^1 |u_s|^2 ds \leq I(\varphi^*) + \delta$.
Then, taking $\tilde{X}^{\eta, xy}$ to be the $\eta$-noise process controlled by $u$,
consider a convergent subsequence of a $\limsup$-attaining subsequence of $\tilde{X}^{\eta, xy}$.
By Section \ref{sec:convergence-to-controlled-ode}, $(\tilde{X}^{\eta, xy}, u)$ converges to $(\varphi^*, u)$.
From this we obtain the upper bound
\begin{equation*}%
\begin{split}
    & \limsup_{\eta \downarrow 0} -\eta \log \bE \exp \left\{-\frac{1}{\eta} F(X^{\eta, xy})\right\}  \\
    & \qquad = \limsup_{\eta \downarrow 0} \inf_{\nu^\eta \in \mathcal{A}} \bE \left[F(\bar{X}^{\eta, xy}) + \frac{1}{2}\int_0^1 | \nu^\eta_s |^2 ds \right] \\
    &\qquad \leq \limsup_{\eta \downarrow 0} \bE \left[F(\tilde{X}^{\eta, xy}) + \frac{1}{2}\int_0^1 | u_s |^2 ds \right] \\
    &\qquad \leq \bE \left[F(\varphi^*) + \frac{1}{2}\int_0^1 | u_s |^2 ds \right] \\
    &\qquad = F(\varphi^*) + \frac{1}{2}\int_0^1 | u_s |^2 ds \\
    &\qquad \leq F(\varphi^*) + \inf_{\nu \in U_{\varphi^*}} \frac{1}{2}\int_0^1 | \nu_s |^2 ds + \delta \\
    &\qquad \leq \inf_{\varphi \in C([0,1]: \mathbb{R}^d)} \left[F(\varphi) + \inf_{\nu \in U_{\varphi}} \frac{1}{2}\int_0^1 | \nu_s |^2 ds\right] + 2\delta. \\
\end{split}
\end{equation*}
Since $\delta >0$ is arbitrarily small, the Laplace lower bound \eqref{eq:Laplace-lower-bound} follows.
\end{proof}

\subsection{Uniformity of Laplace principle on compacts}
\label{sec:uniform}

This section is aimed at proving the uniform Laplace principle of Theorem \ref{thm:uniform-laplace-principle}. The Laplace limit, established in Section \ref{sec:development}, for each $(x,y)$ is the function $\tilde{F}: D^2 \to \bR$, defined by
\begin{equation}
    \tilde{F}(x,y) \coloneq \inf_{\varphi \in \calC_1} F(\varphi) + I^{xy}(\varphi).
\end{equation}
In order to prove the uniform Laplace principle, we first need the continuity of $\tilde F$. %
\begin{proposition}\label{prop:F-tilde-continuous}
    Under Assumptions \ref{ass:b-sigma-local-properties}-\ref{ass:log-gradient-convergence}$, \tilde{F}$ is continuous.
\end{proposition}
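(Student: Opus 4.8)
The plan is to establish continuity of $\tilde F(x,y) = \inf_{\varphi \in \calC_1} [F(\varphi) + I^{xy}(\varphi)]$ by a two-sided argument: showing that $\tilde F$ is both upper and lower semicontinuous at an arbitrary point $(x_0, y_0) \in D^2$. Since $F$ is bounded and continuous, the entire burden falls on understanding how the rate function $I^{xy}$ and, more importantly, the constraint set $U^{xy}_\varphi = \{\nu \in L^2([0,1]:\bR^d): \varphi(\cdot) = \int_0^\cdot (b(s,\varphi(s)) + g^y(s,\varphi(s)) + \sigma(s,\varphi(s))\nu_s)\,ds\}$ vary with $(x,y)$. The key structural fact I would exploit is that, rather than working with $I^{xy}$ directly, it is cleaner to reparametrize: a function $\varphi$ with $\varphi(0) = x$ lies in the effective domain of $I^{xy}$ iff it solves the controlled ODE \eqref{eq:noise-free-controlled-BB-00} for some control $\nu$, so $\tilde F(x,y)$ can be rewritten as an infimum over controls $\nu \in L^2$ of $F(\Phi^{xy}(\nu)) + \frac12\int_0^1|\nu_s|^2\,ds$, where $\Phi^{xy}(\nu)$ denotes the unique solution of the controlled ODE started at $x$ with drift involving $g^y$. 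The problem then reduces to the joint continuity (in an appropriate sense) of the map $(x, y, \nu) \mapsto \Phi^{xy}(\nu)$.

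For the upper bound (upper semicontinuity), I would fix $(x_0, y_0)$ and a near-optimal control $\nu^*$ achieving $\tilde F(x_0, y_0)$ up to $\varepsilon$; truncating if necessary we may assume $\frac12\int_0^1 |\nu^*_s|^2\,ds \le \tilde F(x_0, y_0) + \norm{F}_\infty + 1 =: M$, so $\nu^*$ lives in a fixed $L^2$-ball. For $(x,y)$ near $(x_0,y_0)$, feed the \emph{same} control $\nu^*$ into the ODE with parameters $(x,y)$: by Assumptions \ref{ass:b-sigma-local-properties} (local Lipschitzness of $b$, $C^1$ regularity of $\sigma$) and \ref{ass:bounded-lipshitzness-of-log-gradients} (local Lipschitzness of $g^y$ in both $x$ and $y$ away from $t=1$), a Grönwall estimate on $[0, 1-\delta]$ gives $\sup_{t \le 1-\delta}|\Phi^{xy}(\nu^*)(t) - \Phi^{x_0 y_0}(\nu^*)(t)| \le C_\delta(|x - x_0| + |y - y_0|)$, and the contribution on $[1-\delta, 1]$ is controlled by choosing $\delta$ small using the time-reversed representation \eqref{eq:reverse-xy} (to handle the singularity of $g^y$ at $t=1$, one runs the symmetric Grönwall argument backward from $y$). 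Continuity of $F$ then yields $\limsup_{(x,y)\to(x_0,y_0)} \tilde F(x,y) \le F(\Phi^{x_0y_0}(\nu^*)) + \frac12\int|\nu^*|^2 \le \tilde F(x_0,y_0) + \varepsilon$.

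For the lower bound (lower semicontinuity), I would take a sequence $(x_n, y_n) \to (x_0, y_0)$ and near-optimal controls $\nu^n$ for $\tilde F(x_n, y_n)$; again these may be taken in a fixed $L^2$-ball $S_M$, which is weakly sequentially compact by Banach–Alaoglu, so along a subsequence $\nu^n \rightharpoonup \nu$ weakly in $L^2$. One then shows $\Phi^{x_n y_n}(\nu^n) \to \Phi^{x_0 y_0}(\nu)$ in $\calC_1$ — this is exactly the same convergence-to-the-controlled-ODE argument carried out in Section \ref{sec:convergence-to-controlled-ode}, now with the extra $(x_n, y_n)$-dependence, handled by adding a term $|g^{y_n} - g^{y_0}|_{\infty}$ that vanishes by continuity of $g^y$ in $y$ (which follows from Assumption \ref{ass:bounded-lipshitzness-of-log-gradients}) plus the Grönwall closure, and again a $\delta$-cutoff near $t=1$ with the reversed dynamics. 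Lower semicontinuity of the $L^2$-norm under weak convergance gives $\frac12\int|\nu|^2 \le \liminf \frac12\int|\nu^n|^2$, and continuity of $F$ gives $F(\Phi^{x_0y_0}(\nu)) = \lim F(\Phi^{x_ny_n}(\nu^n))$, so $\tilde F(x_0, y_0) \le F(\Phi^{x_0y_0}(\nu)) + \frac12\int|\nu|^2 \le \liminf_n [F(\Phi^{x_ny_n}(\nu^n)) + \frac12\int|\nu^n|^2] = \liminf_n \tilde F(x_n, y_n)$.

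The main obstacle, in both halves, is the singularity of the $h$-transform drift $g^y$ as $t \uparrow 1$: one cannot apply Grönwall naively on the closed interval $[0,1]$, and the bridge is pinned at $y$ only in the limit. The mechanism for dealing with this is the time-reversal representation \eqref{eq:reverse-xy} together with Assumptions \ref{ass:transition-density-exists-and-Hloc}, \ref{ass:bounded-lipshitzness-of-log-gradients}, and \ref{ass:exp-boundedness}: split $[0,1] = [0, 1-\delta] \cup [1-\delta, 1]$, run the forward Grönwall comparison on the first piece and the backward one (in terms of $\overset{\leftarrow}{g}^x_\eta$, started from $y$) on the second, and let $\delta \downarrow 0$ after uniform-in-$(x,y)$ control of the modulus of continuity — the latter being precisely the tightness-type estimate already proved in Section \ref{sec:tightness-of-controlled-via-Lipschitz}. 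A secondary technical point is verifying that near-optimal controls can always be taken in a fixed $L^2$-ball uniformly for $(x,y)$ in a neighborhood of $(x_0,y_0)$; this uses $\tilde F(x_0,y_0) < \infty$ (which holds since, e.g., the ODE without a $\sigma\nu$ correction connecting $x$ to $y$ has finite energy for $(x,y)$ in a compact subdomain under our assumptions) together with $F$ bounded.
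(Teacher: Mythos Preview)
Your argument is correct, and your upper-semicontinuity half is exactly what the paper does. The difference is that the paper never runs your lower-semicontinuity half at all: instead it observes that, given two points $(x,y)$ and $(x',y')$, one may assume without loss of generality that $\tilde F(x,y) \le \tilde F(x',y')$, pick a near-optimal control $\nu$ for the smaller side, and feed \emph{that same} $\nu$ into the controlled ODE started from $(x',y')$. A Gr\"onwall comparison on $[0,1-\delta]$ (using the local Lipschitz properties of $b$, $\sigma$, and $g^{\cdot}$ in both spatial arguments), together with Lemma~\ref{lem:controlled-bridges-near-y} for the piece $[1-\delta,1]$, shows the two trajectories are uniformly close, hence $\tilde F(x',y') - \tilde F(x,y)$ is small. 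This one-sided bound, combined with the symmetry in the choice of which pair has the smaller value, gives continuity directly.

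Your route via weak $L^2$-compactness and lower semicontinuity of the norm is the textbook pattern and is perfectly valid; it is in fact the same mechanism already used in Section~\ref{sec:convergence-to-controlled-ode} and in proving goodness of the rate function, so it fits naturally into the paper's framework. The paper's WLOG trick is simply more economical here, since it reuses the single ``transfer the control and Gr\"onwall'' step for both directions at once. Your handling of the singularity at $t=1$ by splitting $[0,1]$ at $1-\delta$ and appealing to the reversed dynamics is the same idea as the paper's (the paper packages it as Lemma~\ref{lem:controlled-bridges-near-y}).
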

The proof of Proposition \ref{prop:F-tilde-continuous} is given at the end of this section.
With this continuity established, we can now give the proof of Theorem \ref{thm:uniform-laplace-principle}.  %

\begin{proof}[Proof of Theorem \ref{thm:uniform-laplace-principle}]
    Consider a compact set $K \Subset D$.
    The main step is to show that if pairs $(x, y)$ and $(x', y')$ are selected from this set, then, uniformly over $K$, if they are close enough, then asymptotically so are the resulting quantities $\tilde F_\eta(x, y) $ and $\tilde F_\eta(x', y')$, where
    \[ 
    \tilde F_\eta(x, y) \coloneq -\eta \log \bE\left[\exp - \frac{1}{\eta} F(X^{\eta, xy}) \right].\]  
    This, when combined with Proposition \ref{prop:F-tilde-continuous}, will yield the uniform Laplace principle.

    Assume w.l.o.g. that $\tilde F_\eta(x, y) \leq \tilde F_\eta(x', y')$.
    Using the representation formula, we have for each $\eta$ that there is some control $\nu$ such that
    \begin{equation}
    \begin{split}
         -\eta \log \bE\left[\exp - \frac{1}{\eta} F(X^{\eta, xy}) \right] &= \inf_{\tilde{\nu}} \bE\left[F({\bar X}^{\eta, xy}) + \frac{1}{2} \int_0^1 |\tilde \nu_s|^2 ds \right] \\
         &\geq \bE\left[F({\bar X}^{\eta, xy}) + \frac{1}{2} \int_0^1 |\nu_s|^2 ds \right] - \varepsilon,
    \end{split}
    \end{equation}
    for some arbitrarily small $\varepsilon$, while,
    \begin{equation*}
        \frac{1}{2} \int_0^1 |\nu_s|^2 ds \leq 2 \norm{F}_\infty + 1.
    \end{equation*}

    For a large enough compact set $K_{+} \supseteq K$, we will get that for $\eta \leq \eta_0$, \[\bP(\tau^{\eta, \tilde x \tilde y}_{K_+} < \infty) \leq e^{-\frac{2((2 \norm{F}_\infty + 1)+1)}{\eta}}\] for all $x,y \in K$ by Assumption \ref{ass:exp-boundedness}. %
    Now, take the same $\nu$ and apply it to the representation for $(x', y')$ to obtain
    \begin{equation}\label{eq:ULDP-difference-to-representation-epsilon}
    \begin{split}
        0 &\leq \tilde F_\eta(x', y') - \tilde F_\eta(x, y) \\
        &=-\eta \log \bE\left[\exp - \frac{1}{\eta} F(X^{\eta, x'y'}) \right] - -\eta \log \bE\left[\exp - \frac{1}{\eta} F(X^{\eta, xy}) \right] \\ 
        &\leq \bE\left[F({\bar X}^{\eta, x'y'}) + \frac{1}{2} \int_0^1 |\nu_s|^2 ds \right] - \bE\left[F({\bar X}^{\eta, xy}) + \frac{1}{2} \int_0^1 |\nu_s|^2 ds \right] + \varepsilon \\
        &= \bE\left[F({\bar X}^{\eta, x'y'}) - F({\bar X}^{\eta, xy})\right] + \varepsilon. \\
    \end{split}
    \end{equation}

    The arguments of Section \ref{sec:tightness-of-controlled-via-Lipschitz} that showed tightness of $\{\bar X^{\eta, xy}\}_{\eta > 0}$ are also valid for the tightness of $\{\bar X^{\eta, xy}\}_{\eta > 0, (x,y) \in K^2}$.
    Then, we may take a compact set $A \in \calC_1$ such that 
    \begin{equation}\label{eq:A-C1-complement-probability-bound}
        \bP((X^{\eta, xy}, X^{\eta, x'y'} \in A)^c) \leq \varepsilon / (2\norm{F}_\infty).
    \end{equation}
    Since $F$ is continuous, there is a $\lambda > 0$ such that, for all $\varphi_1, \varphi_2 \in A$, $\norm{\varphi_1 - \varphi_2}_\infty < \lambda$ implies that $|F(\varphi_1) - F(\varphi_2)| < \varepsilon$.
    Choose $\delta$ in Lemma \ref{lem:controlled-bridges-near-y} such that 
    \begin{equation*}
        \lim_{\eta \downarrow 0} \bP\left(\sup_{t \in [1-\delta, 1]} |\bar{X}^{\eta, xy}_t - y| \geq \frac{\lambda}{3}\right) = 0.
    \end{equation*}
    Now consider the difference between the two processes up to $t \leq 1 - \delta$ on the event $X^{\eta, xy}, X^{\eta, x'y'} \in A$.
    \begin{equation}\label{eq:long}
    \begin{split}
        &\sup_{s \leq t} |{\bar X}^{\eta, x'y'}_s - {\bar X}^{\eta, xy}_s| \\
        &\qquad = \sup_{s \leq t} \Bigl|x' - x + \int_0^s (b(u, \bar X^{\eta, x'y'}_u) - b(u, \bar X^{\eta, xy}_u) + g^{y'}(u, \bar X^{\eta, x'y'}_u) - g^{y}(u, \bar X^{\eta, xy}_u))du \\
        &\qquad\qquad + \int_0^s \sqrt{\eta}(\sigma(u, \bar X^{\eta, x'y'}_u) - \sigma(u, \bar X^{\eta, xy}_u))(dW_u + \nu_u du)\Bigr| \\
        &\qquad \leq |x' - x| + \sup_{s \leq t} \int_0^s \Bigl(|b(u, \bar X^{\eta, x'y'}_u) - b(u, \bar X^{\eta, xy}_u)| + |g^{y'}_\eta(u, \bar X^{\eta, x'y'}_u) - g^{y'}_\eta(u, \bar X^{\eta, xy}_u)| \\
        &\qquad\qquad\qquad + |g^{y'}_\eta(u, \bar X^{\eta, xy}_u) - g^{y}_\eta(u, \bar X^{\eta, xy}_u)| + |(\sigma(u, \bar X^{\eta, x'y'}_u) - \sigma(u, \bar X^{\eta, xy}_u))\nu_u|\Bigr)du \\
        &\qquad\qquad + \Bigl|\int_0^s \sqrt{\eta}(\sigma(u, \bar X^{\eta, x'y'}_u) - \sigma(u, \bar X^{\eta, xy}_u))dW_u\Bigr| \\
        &\qquad \leq |x' - x| + \sup_{s \leq t} \int_0^s \Bigl(2L|\bar X^{\eta, x'y'}_u - \bar X^{\eta, xy}_u| %
        + L|y' - y| + 2L|\bar X^{\eta, x'y'}_u - \bar X^{\eta, xy}_u||\nu_u|\Bigr)du \\
        &\qquad\qquad + \Bigl|\int_0^s \sqrt{\eta}(\sigma(u, \bar X^{\eta, x'y'}_u) - \sigma(u, \bar X^{\eta, xy}_u))dW_u\Bigr| \\
        &\qquad \leq (1+L)\Delta + 2L \int_0^t \Bigl(\sup_{s \leq u} |\bar X^{\eta, x'y'}_s - \bar X^{\eta, xy}_s| + \sup_{s \leq u} |\bar X^{\eta, x'y'}_s - \bar X^{\eta, xy}_s||\nu_u|\Bigr)du \\
        &\qquad\qquad + \Bigl|\int_0^s \sqrt{\eta}(\sigma(u, \bar X^{\eta, x'y'}_u) - \sigma(u, \bar X^{\eta, xy}_u))dW_u\Bigr| \\
    \end{split} 
    \end{equation}
    where we used the local Lipschitz properties of $b$ and $g^{\tilde y}$, letting $L$ be a common Lipschitz constant (also for $\sigma$) with respect to $\delta$ and $A$ chosen above, in the penultimate step, and defining $\Delta \coloneq |x' - x| + |y' - y|$.
    
    We can bound the control term above, using the Cauchy-Schwarz inequality, as 
    \begin{equation*}
    \begin{split}
        &\int_0^t \sup_{s \leq u} |X^{\eta, x'y'}_s - X^{\eta, xy}_s||\nu_u|du \\
        &\qquad \leq \Bigl(\int_0^t \sup_{s \leq u} |\bar X^{\eta, x'y'}_s - \bar X^{\eta, xy}_s|^2 du \int_0^t|\nu_u|^{2}du \Bigr)^{\frac{1}{2}} \\
        &\qquad \leq (2(2\norm{F}_\infty+1))^\frac{1}{2} \Bigl(\int_0^t \sup_{s \leq u} |\bar X^{\eta, x'y'}_s - \bar X^{\eta, xy}_s|^2 du \Bigr)^{\frac{1}{2}}. \\
    \end{split}
    \end{equation*}
    Taking the square of \eqref{eq:long}, and using Jensen's inequality to get it under the integral sign, we have for an appropriate constant $C$ that
    \begin{equation}\label{eq:long-cont}
    \begin{split}
        &\sup_{s \leq t} |{\bar X}^{\eta, x'y'}_s - {\bar X}^{\eta, xy}_s|^2 \\
        &\qquad \leq C\Delta + C \int_0^t \sup_{s \leq u} |\bar X^{\eta, x'y'}_s - \bar X^{\eta, xy}_s|^2 du \\
        &\qquad\qquad + C \Bigl|\int_0^s \sqrt{\eta}(\sigma(u, \bar X^{\eta, x'y'}_u) - \sigma(u, \bar X^{\eta, xy}_u))dW_u\Bigr|^2 \\
    \end{split} 
    \end{equation}
    The last right-hand side stochastic integral term may be bounded with high probability ``on $(X^{\eta, xy}, X^{\eta, x'y'} \in A)$'', using \eqref{eq:classical-estimate-d}, since both $\sigma$-terms are bounded by $L$ in Frobenius norm, so in each dimension $i$, we get that 
    \begin{equation*}
        \langle \int_0^{\cdot} \sqrt{\eta}(\sigma(u, \bar X^{\eta, x'y'}_u) - \sigma(u, \bar X^{\eta, xy}_u))_i dW_u \rangle_t \leq \eta \int_0^{t} L |\bar X^{\eta, x'y'}_u - \bar X^{\eta, xy}_u| du.
    \end{equation*}
    The last integral is bounded by some constant, we may assume by $C$.
    Then, for small enough $\eta$, the last term in \eqref{eq:long-cont} is bounded, e.g. by $C\Delta$ with probability going to $1$, and on these events,
    \begin{equation*}
    \begin{split}
        &\sup_{s \leq t} |{\bar X}^{\eta, x'y'}_s - {\bar X}^{\eta, xy}_s|^2 \leq 2C\Delta + C \int_0^t \sup_{s \leq u} |\bar X^{\eta, x'y'}_s - \bar X^{\eta, xy}_s|^2 du.
    \end{split} 
    \end{equation*}
    Now setting $\Delta$ small enough, we get by Gr\"onwall's inequality that, with probability going to $1$,
    \begin{equation*}
        \sup_{t \leq 1-\delta} |{\bar X}^{\eta, x'y'}_t - {\bar X}^{\eta, xy}_t|^2 \leq \lambda.
    \end{equation*}
    With $\bP(\sup_{t \in [1-\delta, 1]} |\bar{X}^{\eta, xy}_t - \tilde y| \geq \lambda / 3) \to 0$ for both $\tilde y = y$ and $\tilde y = y'$, and assuming w.l.o.g. that $\Delta \leq \lambda / 3$, we get the same conclusion for $t \geq 1-\delta$ using the triangle inequality, and thus
    this extends to $t \leq 1$.
    In other words,
    \begin{equation*}
        \sup_{t \leq 1} |{\bar X}^{\eta, x'y'}_t - {\bar X}^{\eta, xy}_t|^2 \leq \lambda,
    \end{equation*}
    with probability going to $1$ as $\eta \downarrow 0$, uniformly over $K$.
    Thus,
    \begin{equation*}
    \begin{split}
        &\bE\left[|F({\bar X}^{\eta, x'y'}) - F({\bar X}^{\eta, xy})| \mathbbm{1}_{\bar \tau^{\eta}_{K_+} = \infty} \right] \\
        &\qquad \leq \varepsilon + \bE\left[|F({\bar X}^{\eta, x'y'}) - F({\bar X}^{\eta, xy})| \mathbbm{1}_{\bar \tau^{\eta}_{K_+} = \infty}\mathbbm{1}_{\sup_{t \leq 1} |{\bar X}^{\eta, x'y'}_t - {\bar X}^{\eta, xy}_t|^2 > \lambda} \right] \\
        &\qquad \leq \varepsilon + \bP\left({\bar \tau^{\eta}_{K_+} = \infty}, \, \sup_{t \leq 1} |{\bar X}^{\eta, x'y'}_s - {\bar X}^{\eta, xy}_s|^2 > \lambda \right) \\
        &\qquad \leq 2\varepsilon,
    \end{split}
    \end{equation*}
    for all small enough $\eta$, uniformly over all $(x,y), (x', y') \in K$ with distance $\Delta$ small enough.
    Combined with \eqref{eq:ULDP-difference-to-representation-epsilon} and \eqref{eq:A-C1-complement-probability-bound}, this gives 
    \begin{equation*}
        |\tilde F_\eta(x', y') - \tilde F_\eta(x, y)| \leq 4 \varepsilon
    \end{equation*}
    for all small enough $\eta$.
    Since $\varepsilon$ is arbitrarily small, %
    we can say
    that for any $\varepsilon > 0$ there are small enough $\Delta > 0$ and $\eta_0 > 0$ such that, for any two pairs $(x,y), (x',y') \in K \times K$, $|\tilde F_\eta(x', y') - \tilde F_\eta(x, y)| \leq \varepsilon$ for all $\eta \leq \eta_0$, whenever $|x' - x| + |y' - y| \leq \Delta$.

    This conclusion leads to the uniform Laplace principle:
    For $\varepsilon > 0$, choose a radius $\Delta$ chosen small enough so that the above holds, which also gives us that $|\tilde F(z') - \tilde F(z)| \leq \varepsilon$ whenever $|z' - z| \leq \Delta$.
    Cover $K \times K$ by finitely many balls of radius $\Delta$, with centers $\{z_i\}_{i=1}^{N_\Delta}$.
    Then, take an $\eta_1 \leq \eta_0$ such that $|\tilde F_\eta(z_i) - \tilde F(z_i)| \leq \varepsilon$ for all centers $z_i$.
    Then, for any $z \in K \times K$, there is a ball $B_{\bR^d \times \bR^d}(z_i, \Delta)$ to which it belongs, meaning that for $\eta \leq \eta_1$,
    \begin{equation*}
    \begin{split}
        |\tilde F_\eta(z) - \tilde F(z)| &\leq |\tilde F_\eta(z) - \tilde F_\eta(z_i)| + |\tilde F_\eta(z_i) - \tilde F(z_i)| + |\tilde F(z_i) - \tilde F(z)| \\
        &\leq 3\varepsilon.
    \end{split}
    \end{equation*}
    Since $\varepsilon$ can be made arbitrarily small, and $\eta_1$ is chosen independently of $z \in K \times K$, we have the uniform Laplace principle:
    \begin{equation*}
        \lim_{\eta \downarrow 0} \sup_{z \in K \times K} |\tilde F_\eta(z) - \tilde F(z)| = 0.
    \end{equation*}
\end{proof}

With the proof of Theorem \ref{thm:uniform-laplace-principle} complete, we now prove the necessary Proposition \ref{prop:F-tilde-continuous}, that establishes the continuity of $\tilde F$. This in turn relies on the following Lemma. %
\begin{lemma}\label{lem:controlled-bridges-near-y}
    If $K \Subset D$, then for any $\xi > 0$ there exists $\delta > 0$ such that for all $x, y \in K$, $\bP(\sup_{t \in [1-\delta, 1]} |\bar{X}^{\eta, xy}_t - y| \geq \xi) \to 0$ as $\eta \downarrow 0$, and \[\bP\left(\sup_{t \in [1-\delta, 1]} |\bar{X}^{xy}_t - y| \geq \xi\right) = 0.\]
\end{lemma}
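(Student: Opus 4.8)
The strategy is to push the problem to a neighbourhood of reversed time $0$, where the bridge dynamics are non-degenerate, and then reuse the exponential estimates from Section \ref{sec:tightness-of-controlled-via-Lipschitz}. One cannot argue directly near $t=1$ because the $h$-transform drift $g^y_\eta$ degenerates there (for Brownian motion it equals $-(x-y)/(1-t)$): this is precisely the mechanism that pins the bridge at $y$, but it is not amenable to a forward Gr\"onwall argument, whereas time reversal makes it disappear. As a first reduction, recall that the controls $\nu^\eta$ entering the representation have uniformly bounded energy; write $M$ for a bound on $\int_0^1|\nu^\eta_s|^2\,ds$ (in the application of the lemma inside the proof of Theorem \ref{thm:uniform-laplace-principle}, one may take $M=2(2\norm{F}_\infty+1)$). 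Then the Girsanov/Cauchy--Schwarz argument already carried out in \eqref{eq:X_eta_xy_controlled_Cauchy_Schwarz} — in which, under the tilted measure $\bQ^\eta$, the controlled bridge $\bar X^{\eta,xy}$ has the law of the uncontrolled bridge $X^{\eta,xy}$ — gives, for every measurable $A\subseteq\calC_1$ and every such control,
\[
    \bP\big(\bar X^{\eta,xy}\in A\big)\ \le\ e^{M/\eta}\,\bP\big(X^{\eta,xy}\in A\big).
\]
Taking $A=\{\varphi\in\calC_1:\sup_{t\in[1-\delta,1]}|\varphi_t-y|\ge\xi\}$, it suffices to bound $\bP\big(\sup_{t\in[1-\delta,1]}|X^{\eta,xy}_t-y|\ge\xi\big)$ by something of order $e^{-(M+1)/\eta}+e^{-c_\xi/(\eta\delta)}$ with $c_\xi>0$ independent of $\delta,\eta$ and of $x,y\in K$, and then to choose $\delta=\delta(\xi,K)$ small enough that $c_\xi/\delta>M+1$.

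For this bound, pass to the reversed bridge $\hat X^{\eta,xy}_t=X^{\eta,xy}_{1-t}$, which starts at $y$ and solves \eqref{eq:reverse-xy}, so that $\sup_{t\in[1-\delta,1]}|X^{\eta,xy}_t-y|=\sup_{s\in[0,\delta]}|\hat X^{\eta,xy}_s-y|$. On $[0,\delta]$ the drift $-b(1-\cdot,\cdot)+\overset{\leftarrow}{g}{}^x_\eta(\cdot,\cdot)$ of \eqref{eq:reverse-xy} is bounded on compacts, uniformly in $\eta$ small and in $x,y\in K$, since $1-s\ge 1-\delta$ keeps the running time of the underlying process bounded away from $0$ (Assumptions \ref{ass:transition-density-exists-and-Hloc} and \ref{ass:bounded-lipshitzness-of-log-gradients}, and the reversed analogue of Assumption \ref{ass:log-gradient-convergence}). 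Choosing, via the reversed form of Assumption \ref{ass:exp-boundedness}, a compact $K_{M+1}\Subset D$ with $\bP(\hat\tau\le\delta)\le e^{-(M+1)/\eta}$ for $\eta$ small, uniformly over $x,y\in K$ — where $\hat\tau$ is the exit time of $\hat X^{\eta,xy}$ from $K_{M+1}$ — and noting that on $\{\hat\tau>\delta\}$ the accumulated drift is at most $C\delta$, we obtain, for $\delta<\xi/(2C)$,
\[
    \bP\Big(\sup_{s\in[0,\delta]}|\hat X^{\eta,xy}_s-y|\ge\xi\Big)\ \le\ e^{-(M+1)/\eta}\ +\ \bP\Big(\sup_{s\in[0,\delta]}\Big|\sqrt\eta\int_0^{s\land\hat\tau}\sigma(1-u,\hat X^{\eta,xy}_u)\,d\hat W_u\Big|\ge\tfrac\xi2\Big).
\]
The stopped stochastic integral has quadratic variation at most $\eta L^2(1+\sup_{z\in K_{M+1}}|z|)^2\delta$ in each coordinate (linear growth of $\sigma$), so the classical estimate \eqref{eq:classical-estimate-d} bounds the last probability by $2d\exp(-c_\xi/(\eta\delta))$ for a constant $c_\xi>0$ depending only on $\xi,d$ and $K$. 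Combining this with the reduction above and choosing $\delta$ as described proves the first assertion, uniformly over $x,y\in K$ and over controls of energy at most $M$.

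For the limiting process, let $(\bar X^{xy},\nu)$ be a weak subsequential limit of $(\bar X^{\eta,xy},\nu^\eta)$; by weak lower semicontinuity of the $L^2$-norm, $\nu$ again has energy at most $M$. Apply the first assertion with parameter $\xi/2$ to obtain a $\delta$ with $\bP\big(\sup_{t\in[1-\delta,1]}|\bar X^{\eta,xy}_t-y|\ge\xi/2\big)\to0$. Using the Skorokhod representation to realise the convergence $\bar X^{\eta_k,xy}\to\bar X^{xy}$ almost surely in $\calC_1$, and passing to a further subsequence along which the indicators $\mathbbm{1}\{\sup_{t\in[1-\delta,1]}|\bar X^{\eta_k,xy}_t-y|\ge\xi/2\}$ tend to $0$ almost surely, the continuity of $\varphi\mapsto\sup_{t\in[1-\delta,1]}|\varphi_t-y|$ on $\calC_1$ yields $\sup_{t\in[1-\delta,1]}|\bar X^{xy}_t-y|\le\xi/2<\xi$ almost surely. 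Hence $\bP\big(\sup_{t\in[1-\delta,1]}|\bar X^{xy}_t-y|\ge\xi\big)=0$, and this same $\delta$ also serves for the first assertion.

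The main obstacle is the degeneracy of $g^y_\eta$ at $t=1$, which is circumvented by the time reversal \eqref{eq:reverse-xy}; the only remaining point requiring care is that the drift bound $C$, the rate $c_\xi$, the compact $K_{M+1}$, and the exit estimate are all uniform over $x,y\in K$ and over controls of energy at most $M$, which holds because they originate from compactness and from Assumptions \ref{ass:exp-boundedness} and \ref{ass:bounded-lipshitzness-of-log-gradients}, which are stated uniformly on compacts.
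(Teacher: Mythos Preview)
Your proposal is correct and follows essentially the same route as the paper: reduce from the controlled to the uncontrolled bridge via the Girsanov/Cauchy--Schwarz bound \eqref{eq:X_eta_xy_controlled_Cauchy_Schwarz}, time-reverse to \eqref{eq:reverse-xy} so the drift is bounded on compacts near reversed time $0$, localize via Assumption \ref{ass:exp-boundedness}, and apply the martingale estimate \eqref{eq:classical-estimate-d} to the stopped stochastic integral; the second assertion is then deduced by passing to the limit. Your treatment of the limiting statement (Skorokhod representation, using $\xi/2$ to handle the closed event) is slightly more detailed than the paper's, which simply asserts ``by consequence,'' but otherwise the arguments coincide.
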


\begin{proof}[Proof of Lemma \ref{lem:controlled-bridges-near-y}]
    Using the same technique as before, we see that
    \begin{equation}
    \begin{split}
        \bP(\sup_{t \in [1-\delta, 1]} |\bar{X}^{\eta, xy}_t - y| \geq \xi) \leq e^{\frac{M}{\eta}} \bP(\sup_{t \in [1-\delta, 1]} |X^{\eta, xy}_t - y| \geq \xi),
    \end{split}
    \end{equation}
    so what needs to be shown is that $\bP(\sup_{t \in [1-\delta, 1]} |X^{\eta, xy}_t - y| \geq \xi) = o(e^{-\frac{M}{\eta}})$.
    For a large enough compact set $K_{+} \supseteq K$, we will get that for $\eta \leq \eta_0$, $\bP(\tau^{\eta, xy}_{K_+} < \infty) \leq e^{-\frac{M+1}{\eta}}$ for all $x,y \in K$ by Assumption \ref{ass:exp-boundedness}, and thus we may condition on $\tau^{\eta, xy}_{K_+} = \infty$.
    Studying a reversed bridge $\hat{X}^{\eta, xy}_t$ on this event, this means it is equal to their stopped versions $\hat{X}^{\eta, xy}_{t \land \hat \tau^{\eta, xy}_{K_+}}$, with the stopping time $\tau^{\eta, xy}_{K_+} \coloneq \inf \{t \geq 0: \hat{X}^{\eta, xy}_t \notin K_+\}$.
    In the reversed dynamics, \ref{eq:reverse-xy}, we get then that $b(1 - t, \hat{X}^{\eta, xy}_{t \land \hat \tau^{\eta, xy}_{K_+}})$ and $\overset{\leftarrow}{g}{}^x_\eta(t, \hat{X}^{\eta, xy}_{t \land \hat \tau^{\eta, xy}_{K_+}})$, and $\sigma(1-t, \hat{X}^{\eta, xy}_{t \land \hat \tau^{\eta, xy}_{K_+}})$ are bounded by some constant $C_\mathrm{max}$ for e.g. $t \in [0, \frac{1}{2}]$.
    Thus, for $\delta \leq \frac{1}{2}$,
    \begin{equation*}
    \begin{split}
        &\bP\Bigl(\sup_{t \in [1-\delta, 1]} |X^{\eta, xy}_t - y| \geq \xi, \tau^{\eta, xy}_{K_+} = \infty\Bigr) \\
        &\qquad = \bP\Bigl(\sup_{t \in [0, \delta]} |\hat X^{\eta, xy}_t - y| \geq \xi, \hat\tau^{\eta, xy}_{K_+} = \infty) \\
        &\qquad \leq \bP\Bigl(\sup_{t \in [0, \delta]} \Bigr|\int_0^t (-b(1 - s, \hat{X}^{\eta, xy}_{s \land \hat \tau^{\eta, xy}_{K_+}}) + \overset{\leftarrow}{g}{}^x_\eta(s, \hat{X}^{\eta, xy}_{s \land \hat \tau^{\eta, xy}_{K_+}})) ds \\
        &\qquad\qquad + \int_0^t \sqrt{\eta} \sigma(1 - s, \hat{X}^{\eta, xy}_{s \land \hat \tau^{\eta, xy}_{K_+}}) d\hat{W}_s \Bigr| \geq \xi\Bigr) \\
        &\qquad\leq \bP\Bigl(\sup_{t \in [0, \delta]} \Bigl|\int_0^t \sqrt{\eta} \sigma(1 - s, \hat{X}^{\eta, xy}_{s \land \hat \tau^{\eta, xy}_{K_+}}) d\hat{W}_s \Bigr| \geq \xi - 2C_\mathrm{max}\delta\Bigr).
    \end{split}
    \end{equation*}
    The quadratic variation of the last stochastic integral is bounded by $\eta C_\mathrm{max} \delta$ in each dimension.
    Using the inequality \eqref{eq:classical-estimate-d}, we get
    \begin{equation*}
    \begin{split}
        &\bP\Bigl(\sup_{t \in [1-\delta, 1]} |X^{\eta, xy}_t - y| \geq \xi, \tau^{\eta, xy}_{K_+} = \infty\Bigr) \\
        &\qquad\leq \bP\Bigl(\sup_{t \in [0, \delta]} \Bigl|\int_0^t \sqrt{\eta} \sigma(1 - s, \hat{X}^{\eta, xy}_{s \land \hat \tau^{\eta, xy}_{K_+}}) d\hat{W}_s \Bigr| \geq \xi - 2C_\mathrm{max}\delta\Bigr) \\
        &\qquad\leq 2d\exp-\frac{((\xi - 2C_\mathrm{max}\delta)/\sqrt{d})^2}{2(\eta C_\mathrm{max}\delta)^2} \\
        &\qquad\leq 2d\exp-\frac{(\frac{1}{2}\xi/\sqrt{d})^2}{2(\eta C_\mathrm{max}\delta)} = 2d\exp-\frac{\xi^2}{8d\eta C_\mathrm{max}\delta},
    \end{split}
    \end{equation*}
    where the last inequality holds for small enough $\delta$.
    The last expression is clearly $o(e^{-\frac{M}{\eta}})$ for some small enough $\delta > 0$.

    Combined, we get that $\bP(\sup_{t \in [1-\delta, 1]} |\bar{X}^{\eta, xy}_t - y| \geq \xi) \to 0$ as $\eta \downarrow 0$.
    Specifically, this holds for sequences converging to $(\bar X^{xy}, \nu)$.
    By consequence, we have that $\bP(\sup_{t \in [1-\delta, 1]} |\bar{X}^{xy}_t - y| \geq \xi) = 0$.
\end{proof}

\begin{proof}[Proof of Proposition \ref{prop:F-tilde-continuous}]
    Assume wlog that $\tilde{F}(x,y) \leq \tilde{F}(x', y')$.
    Let $\Delta = |x - x'| + |y - y'|$.
    Let $\varphi \in \calC_1$ and $\nu \in U^{xy}_\varphi$ be such that 
    \begin{equation*}
        \tilde{F}(x,y) + \delta_F \geq F(\varphi) + \frac{1}{2}\int_0^1 |\nu_s|^2 ds,
    \end{equation*}
    where we may assume that the second term is smaller than $2\norm{F}_\infty + 1$.
    Now, let $\varphi'$ be defined on $[0,1)$ by the ODE:
    \begin{equation*}
        \varphi'_t = x' + \int_0^{t} (b(s, \varphi'_s) + g^{y'}(s, \varphi'_s) + \sigma(s, \varphi'_s) \nu_s) ds.
    \end{equation*}
    Then $\nu \in U^{x'y'}_{\varphi'}$.
    Now wish to show that for a small $\Delta$, $\varphi'$ is close to $\varphi$ (in sup-norm).
    Chose $\delta > 0$ from Lemma \ref{lem:controlled-bridges-near-y} such that $\bP(\sup_{t \in [1-\delta, 1]} |\bar{X}^{\tilde x \tilde y}_t - \tilde y| \geq \xi) = 1$ for all $(\tilde x, \tilde y) \in B_{\bR^d \times \bR^d}((x,y), \Delta)$.
    We have that for $s \in [0,1)$:
    \begin{equation*}
    \begin{split}
        |\varphi_s - \varphi'_s| &= \Bigl|x - x' + \int_0^{s} (b(u, \varphi_u) + g^{y}(u, \varphi_u) + \sigma(u, \varphi_u) \nu_u) du \\
        &\qquad - \int_0^{s} (b(u, \varphi'_u) + g^{y'}(u, \varphi'_u) + \sigma(u, \varphi'_u) \nu_u) du \Bigr| \\
        &\leq \Delta + \int_0^{s} |b(u, \varphi_u) - b(u, \varphi'_u)|du + \int_0^s |g^{y}(u, \varphi_u) - g^{y'}(u, \varphi'_u)|du \\
        & \qquad + \int_0^s |(\sigma(u, \varphi_u) - \sigma(u, \varphi'_u)) \nu_u)| du. \\
        &\leq \Delta + \int_0^{s} |b(u, \varphi_u) - b(u, \varphi'_u)|du + \int_0^s |g^{y}(u, \varphi_u) - g^{y'}(u, \varphi_u)|du \\
        & \qquad + \int_0^s |g^{y'}(u, \varphi_u) - g^{y'}(u, \varphi'_u)|du + \int_0^s |(\sigma(u, \varphi_u) - \sigma(u, \varphi'_u)) \nu_u)| du. \\
    \end{split}
    \end{equation*}
    For a sufficiently large $K_\alpha$, we have that $\varphi, \varphi'$ are bounded to $K_\alpha$ with probability $1$.
    Then, $b$, $g^{\cdot}$ (in both its spatial arguments), and $\sigma$ are Lipschitz on $[0, 1-\delta] \times K_\alpha$ with some common constant $L$.
    Inserting it above gives, for $s \in [0, 1-\delta]$,
    \begin{equation*}
    \begin{split}
        |\varphi_s - \varphi'_s| &\leq \Delta + \int_0^{s} 3L|\varphi_u - \varphi'_u|du + \int_0^s L |\varphi_u - \varphi'_u| |\nu_u| du. \\
        &\leq \Delta + \int_0^{s} 3L \sup_{u \leq s}|\varphi_u - \varphi'_u|du + \int_0^s L \sup_{u \leq s}|\varphi_u - \varphi'_u| |\nu_u| du \\
        &\leq \Delta + L \sup_{u \leq s}|\varphi_u - \varphi'_u| (3 + \int_0^s |\nu_u| du).
    \end{split}
    \end{equation*}
    Using the inequality $(a + b)^2 \leq 2a^2 + 2b^2$ twice, we have 
    \begin{equation*}
    \begin{split}
        \sup_{u \leq s} |\varphi_u - \varphi'_u|^2 %
        &\leq 2 \Delta + 2 L^2 \sup_{u \leq s}|\varphi_u - \varphi'_u|^2 (3 + \int_0^s |\nu_u| du)^2 \\
        &\leq 2 \Delta + 2 L^2 \sup_{u \leq s}|\varphi_u - \varphi'_u|^2 (18 + 2(\int_0^s |\nu_u| du)^2) \\
        &\leq 2 \Delta + 2 L^2 \sup_{u \leq s}|\varphi_u - \varphi'_u|^2 (18 + 2\int_0^s |\nu_u|^2 du) \\
        &\leq 2 \Delta + 2 L^2 \sup_{u \leq s}|\varphi_u - \varphi'_u|^2 (18 + 4(2\norm{F}_\infty + 1)). \\
    \end{split}
    \end{equation*}
    Gr\"onwall's inequality, gives that, on $[0, 1-\delta]$, with $\beta \coloneq 2 L^2 (18 + 4(2\norm{F}_\infty + 1))$,
    \begin{equation*}
        \sup_{u \leq s} |\varphi_s - \varphi'_s|^2 \leq 2 \Delta e^{\beta s},
    \end{equation*}
    and, in particular, $\sup_{u \leq 1 - \delta} |\varphi_u - \varphi'_u|^2 \leq 2 \Delta e^{\beta (1-\delta)}$.
    Thus, selecting $\Delta$ small enough yields that $\sup_{u \in [0,1]} |\varphi_u - \varphi'_u|^2 \leq \varepsilon$.
\end{proof}

\section{Laplace principle of the dynamic Schr\"odinger problem}
\label{sec:fullLDP}

With the results of Section \ref{sec:development}, we are ready to ready to prove Theorem \ref{thm:LDP_dSB}, the Laplace principle for $\{\pi_\eta\}_{\eta > 0}$ solving the dynamic Schrödinger problem
\begin{equation*}
    \inf_{\pi \in \Pi^{\calC_1}(\mu, \nu)} \calH(\pi \mid\mid R_\eta).
\end{equation*}
The strategy is to combine the LDP for the static problem with the uniform Laplace principle of the dynamic bridges.

Similar to Section \ref{sec:LaplaceBridge}, we prove the full Laplace principle by proving the corresponding upper and lower bounds. This is done in Sections \ref{sec:laplace-upper-bound} and \ref{sec:laplace-lower-bound}, respectively. Moreover, we identify that the (good) rate function is given by
\begin{align*}
    I_D(\varphi) &= I_{S}(\varphi_0,\varphi_1) + I_B^{\varphi_0 \varphi_1}(\varphi) \\
    &= (c-(-\psi \oplus \phi))(\varphi_0, \varphi_1) + \inf_{\nu \in U^{xy}_\varphi} \frac{1}{2} \int_0^1 |\nu(t)|^2 dt.
\end{align*}

Before we embark on the proofs, some notation and preliminary steps. With $\pi _\eta$ the solution of the dynamic SBP for $R _\eta$ (assumed to exist), we denote by $\pi^{\eta, xy}_{\cdot | 01}$ the stochastic kernel given by a disintegration of $\pi_{\eta}$, when conditioned on $(x,y)$, and similarly for an $m \in \calP(\calC_1)$.
Let $F$ be a bounded continuous functional $F: \calC_1 \to \bR$ and let $X^\eta \sim \pi^\varepsilon$.
Then, using a standard representation formula for $\log \bE \exp F(X)$ for continuous functionals $F$ \cite{BD19} and disintegration, we obtain the following useful expression for $-\eta \log \bE e^{-F(X^\eta) /\eta}$,
\begin{equation*}
\begin{split}
    -\eta \log \bE e^{ -\frac{1}{\eta} F(X^{\eta})} &= \inf_{m \in \mathcal{P}(\calC_1)} \int F dm + \eta \calH(m \mid\mid \pi_\eta) \\
    &= \inf_m \int m_{01}(dx, dy) \int F dm_{\cdot | 01}^{xy} \\
    & \qquad + \eta \left(\calH(m_{01} \mid\mid \pi_{\eta, 01}) + \int m_{01}(dx, dy) \calH(m_{\cdot | 01}^{xy} \mid\mid \pi^{\eta, xy}_{\cdot | 01})\right) \\
    &= \inf_m \int m_{01}(dx, dy) \left(\int F dm_{\cdot | 01}^{xy} + \eta \calH(m_{\cdot | 01}^{xy} \mid\mid \pi^{\eta, xy}_{\cdot | 01})\right) \\
    & \qquad + \eta \calH(m_{01} \mid\mid \pi_{\eta, 01}) \\
    &= \inf_{m_{01} \in \calP(\bR^{2d})} \inf_{m_{\cdot | 01}} \int m_{01}(dx, dy) \left(\int F dm_{\cdot | 01}^{xy} + \eta \calH(m_{\cdot | 01}^{xy} \mid\mid \pi^{\eta, xy}_{\cdot | 01})\right) \\
    & \qquad + \eta \calH(m_{01} \mid\mid \pi_{\eta, 01}). \\
\end{split}
\end{equation*}

\subsection{Laplace upper bound}\label{sec:laplace-upper-bound}

We have that $\pi^{\eta, xy}_{\cdot | 01} = \calL(X^{\eta, xy}) = R^{\eta, xy}$.
We now show that for small enough $\eta$, by the Laplace principle for $R^{\eta, xy}_{\cdot | 01}$, we have the Laplace principle upper bound: for any $\varepsilon > 0$, 
\begin{align*}
-\eta \log \bE\exp -\frac{1}{\eta} F(X^{\eta, xy}) + \varepsilon \geq \inf_{\varphi \in \calC_1} F(\varphi) + I^{xy}(\varphi) \eqcolon \tilde{F}(x,y).
\end{align*}
By Proposition \ref{prop:F-tilde-continuous}, $\tilde{F}: \bR^{2d} \to \bR$ is continuous and bounded.
Also, since $\pi_{\eta, 01}$ satisfies an LDP with rate function $I_S$, we know that for small enough $\eta$,
\[
-\eta \log \int \exp \left(-\frac{1}{\eta} \tilde{F}\right) d\pi_{\eta, 01} + \varepsilon \geq \inf_{(x,y) \in \bR^{2d}} [\tilde{F}(x,y) + I_S(x,y)].
\]
Combining the two, we have

\begin{equation}\label{eq:Laplace-UB-Full-LDP-P1}
\begin{split}
    -\eta &\log \bE \exp -\frac{1}{\eta} F(X^\eta) \\
    &= \inf_{m_{01} \in \calP(\bR^{2d})} \inf_{m_{\cdot | 01}} \int m_{01}(dx, dy) \left(\int F dm_{\cdot | 01}^{xy} + \eta \calH(m_{\cdot | 01}^{xy} || \pi^{\eta, xy}_{\cdot | 01})\right) + \eta \calH(m_{01} || \pi_{\eta, 01}) \\
    &\geq \inf_{m_{01} \in \calP(\bR^{2d})} \int m_{01}(dx, dy) {\left(\inf_{m_{\cdot | 01}^{xy}} \int F dm_{\cdot | 01}^{xy} + \eta \calH(m_{\cdot | 01}^{xy} || \pi^{\eta, xy}_{\cdot | 01})\right)}
    + \eta \calH(m_{01} || \pi_{\eta, 01}) \\
    &= \inf_{m_{01} \in \calP(\bR^{2d})} \int m_{01}(dx, dy) \left(-\eta \log \bE\exp -\frac{1}{\eta} F(X^{\eta, xy})\right) + \eta \calH(m_{01} || \pi_{\eta, 01}) \\
    &= \inf_{m_{01} \in \calP(\bR^{2d})} \int m_{01}(dx, dy) \tilde F_\eta(x, y) + \eta \calH(m_{01} || \pi_{\eta, 01}), \\
\end{split}
\end{equation}
where in the last step we have defined $\tilde F _\eta (x,y) = -\eta \log \bE [e^{- F(X^{\eta, xy})/\eta}]$. 
Let $F^\varepsilon$ be a modification of $F$, such that its infimum is attained on set $A^\varepsilon \subseteq \calC_1$ whose projection on $t = 0,1$, $A^\varepsilon_{01}$ is compact and contains $\spt \mu \times \spt \nu$, and $\norm{F - F^\varepsilon}_\infty \leq \varepsilon$. %
Using that
\[
    \frac{\int e^f}{\int e^g} \leq e^{\Vert f-g \Vert_\infty},
\]
we have that $\norm{\tilde F_\eta - \tilde F^\varepsilon_\eta} \leq \varepsilon$, and specifically $\tilde F_\eta \geq \tilde F^\varepsilon_\eta - \varepsilon$ for all $\eta > 0$.
Inserting this into the last expression in \eqref{eq:Laplace-UB-Full-LDP-P1} yields
\begin{equation}\label{eq:Laplace-UB-Full-LDP-P1.1}
\begin{split}
    -\eta &\log \bE \exp -\frac{1}{\eta} F(X^\eta) \\
    &\geq \inf_{m_{01} \in \calP(\bR^{2d})} \int m_{01}(dx, dy) \tilde F^\varepsilon_\eta(x, y) + \eta \calH(m_{01} || \pi_{\eta, 01}) - \varepsilon. \\
\end{split}
\end{equation}
Since $\spt \pi_{\eta, 01} = \spt \mu \times \spt \nu \subseteq A^\varepsilon_{01}$, where the infimum of $F^\varepsilon_\eta(x, y)$ is also attained, the selection of $m_{01}$ may now be constrained to measures supported in $A^\varepsilon_{01}$.
By Theorem \ref{thm:uniform-laplace-principle}, for all small enough $\eta$ we have that \[\sup_{x,y \in A^\varepsilon_{01}} |\tilde F^\varepsilon_\eta(x, y) - \tilde F^\varepsilon(x, y)| \leq \varepsilon.\]
Thus, we have,
\begin{equation*}%
\begin{split}
    -\eta &\log \bE \exp -\frac{1}{\eta} F(X^\eta) \\
    &\geq \inf_{m_{01} \in \calP(\bR^{2d})} \int m_{01}(dx, dy) \tilde F^\varepsilon(x, y) + \eta \calH(m_{01} || \pi_{\eta, 01}) - 2\varepsilon \\
    &\geq -\eta \log \int \exp \left(-\frac{1}{\eta} \tilde{F}^\varepsilon\right) d\pi_{\eta, 01} - 2\varepsilon \\
\end{split}
\end{equation*}
Since this holds for arbitrarily small $\varepsilon > 0$ (for small enough $\eta$), we have, using the Laplace principle of the static SB sequence $\{\pi_{\eta, 01}\}_{\eta > 0}$,
\begin{equation*}
\begin{split}
    \liminf_{\eta \downarrow 0} -\eta \log \bE \exp -\frac{1}{\eta} F(X^\eta) %
    &\geq \liminf_{\eta \downarrow 0} -\eta \log \int \exp \left(-\frac{1}{\eta} \tilde{F}^\varepsilon\right) d\pi_{\eta, 01} - 2\varepsilon \\
    &= \inf_{x,y \in \bR^d} [\tilde F^\varepsilon(x,y) + I_S(x,y)] - 2\varepsilon,
\end{split}
\end{equation*}
Because this is true for any $\varepsilon$, and $\norm{\tilde F_\eta - \tilde F^\varepsilon_\eta}$ can be set arbitrarily small, we have
\begin{equation}
\label{eq:upperId}
\begin{split}
    \liminf_{\eta \downarrow 0} -\eta \log \bE \exp -\frac{1}{\eta} F(X^\eta) %
    &\geq \inf_{x,y \in \bR^d} \left\{\tilde F(x,y) + I_S(x,y)\right\}.
\end{split}
\end{equation}

\noindent The right-hand side is:

\begin{equation*}\label{eq:IBxy_IS_to_ID}
\begin{split}
    \inf_{(x,y) \in \bR^{2d}} \left\{\tilde{F}(x,y) + I_{S}(x,y) \right\} &= \inf_{(x,y) \in \bR^{2d}} \left\{ \inf_{\varphi \in \calC_1} \left[ F(\varphi) + I_B^{xy}(\varphi)\right] + I_{S}(x,y) \right\}\\
    &= \inf_{\varphi \in \calC_1} \left\{ F(\varphi) + I_{S}(\varphi_0,\varphi_1) + I_B^{\varphi_0 \varphi_1}(\varphi)\right\}, \\
\end{split}
\end{equation*}
using the fact that $I_B(x,y)(\varphi) = \infty$ whenever $(x,y) \neq (\varphi_0, \varphi_1)$ in the second equality.
Therefore, defining $I_D$ by 
\begin{equation*}
    I_{D}(\varphi) \coloneq I_B^{\varphi_0 \varphi_1}(\varphi) + I_{S}(\varphi_0,\varphi_1),
\end{equation*}
we see that \eqref{eq:upperId} is the Laplace upper bound with $I_D$ as its rate function.

\subsection{Laplace lower bound}
\label{sec:laplace-lower-bound}

To prove the Laplace lower bound, we use arguments similar to those for the Laplace upper bound. We first show that for any $\varepsilon > 0$, for small enough $\eta$, 
\[-\eta \log \bE\exp -\frac{1}{\eta} F(X^{\eta, xy}) - \varepsilon \leq \tilde{F}(x,y).\]
Assume again that $\tilde{F}: \bR^{2d} \to \bR$ is continuous and bounded.
Then for small enough $\eta$, 
\[
-\eta \log \int \exp \left(-\frac{1}{\eta} \tilde{F}\right) d\pi_{\eta, 01} - \varepsilon \leq \inf_{(x,y) \in \bR^{2d}} [\tilde{F}(x,y) + I_S(x,y)].\]
The key additional step is in the first inequality, motivated as follows: for any selection of $m_{01}$, we may select the specific kernel $m_{\cdot | 01}^{*}: \bR^{2d} \times \calB(\calC_1)$ so that $m_{\cdot | 01}^{*, xy}$ has density $\propto e^{-\frac{1}{\eta}F}$ with respect to $\pi^{\eta, xy}_{\cdot | 01}$.
This actually attains the infimum but we keep it as an inequality since that is enough for the proof.
We obtain,

\begin{equation}\label{eq:Laplace-LB-Full-LDP-P1}
\begin{split}
    -\eta &\log \bE \exp -\frac{1}{\eta} F(X^\eta) \\
    &= \inf_{m_{01} \in \calP(\bR^{2d})} \inf_{m_{\cdot | 01}} \int m_{01}(dx, dy) \left(\int F dm_{\cdot | 01}^{xy} + \eta \calH(m_{\cdot | 01}^{xy} || \pi^{\eta, xy}_{\cdot | 01})\right) \\
    & \qquad \quad + \eta \calH(m_{01} || \pi_{\eta, 01}) \\
    &\leq \inf_{m_{01} \in \calP(\bR^{2d})} \int m_{01}(dx, dy) \left(\int F dm_{\cdot | 01}^{*, xy} + \eta \calH(m_{\cdot | 01}^{*,xy} || \pi^{\eta, xy}_{\cdot | 01})\right) \\
    & \qquad \quad + \eta \calH(m_{01} || \pi_{\eta, 01}) \\
\end{split}
\end{equation}
The inner integral is minimized for all $x, y$ by the choice $m_{\cdot | 01}^{*,xy}$, i.e.,
\begin{equation*}
    \int F dm_{\cdot | 01}^{*, xy} + \eta \calH(m_{\cdot | 01}^{*,xy} || \pi^{\eta, xy}_{\cdot | 01}) = \inf_{\mu} \int F d\mu + \eta \calH(\mu || \pi^{\eta, xy}_{\cdot | 01})
\end{equation*}
Using the standard representation formula again, this is further equal to \begin{equation*}\label{eq:laplace-LB-userepr}
    -\eta \log \bE\exp -\frac{1}{\eta} F(X^{\eta, xy}).
\end{equation*}
Now, we apply the same trick as in Section \ref{sec:laplace-upper-bound}, and replace $F$ by $F^\varepsilon$, whose infimum is attained on set $A^\varepsilon \subseteq \calC_1$ whose projection on $t = 0,1$, $A^\varepsilon_{01}$ is compact and contains $\spt \mu \times \spt \nu$, and $\norm{F - F^\varepsilon}_\infty \leq \varepsilon$.
Using the conclusions from the last two displays, and that \[\norm{-\eta \log \bE\exp -\frac{1}{\eta} F(X^{\eta, xy}) - -\eta \log \bE\exp -\frac{1}{\eta} F^\varepsilon(X^{\eta, xy})}_\infty \leq \varepsilon,\]%
we have,
\begin{equation*}%
\begin{split}
    -\eta &\log \bE \exp -\frac{1}{\eta} F(X^\eta) \\
    &\leq \inf_{m_{01} \in \calP(\bR^{2d})} \int m_{01}(dx, dy) \left(-\eta \log \bE\exp -\frac{1}{\eta} F^\varepsilon(X^{\eta, xy})\right) + \eta \calH(m_{01} || \pi_{\eta, 01}) + \varepsilon. \\
\end{split}
\end{equation*}
As in Section \ref{sec:laplace-upper-bound}, the infimization over $m_{01}$ can now be taken over measures supported in $A^\varepsilon_{01}$, and for all small enough $\eta$,
\[\sup_{x,y \in A^\varepsilon_{01}} |\tilde F^\varepsilon_\eta(x, y) - \tilde F^\varepsilon(x, y)| \leq \varepsilon.\]
Thus, we have
\begin{equation*}%
\begin{split}
    -\eta &\log \bE \exp -\frac{1}{\eta} F(X^\eta) \\
    &\leq \inf_{m_{01} \in \calP(\bR^{2d})} \int m_{01}(dx, dy) \left(\tilde F^\varepsilon(x, y) + 
    \varepsilon \right) + \eta \calH(m_{01} || \pi_{\eta, 01}) + \varepsilon \\
    &= -\eta \log \bE \exp -\frac{1}{\eta} \tilde F^\varepsilon(X^\eta) + 2\varepsilon,
\end{split}
\end{equation*}
and then, applying the Laplace principle of the static sequence,
\begin{equation*}
\begin{split}
    \limsup_{\eta \downarrow 0} -\eta \log \bE \exp -\frac{1}{\eta} F(X^\eta) %
    &\leq \limsup_{\eta \downarrow 0} -\eta \log \int \exp \left(-\frac{1}{\eta} \tilde{F}^\varepsilon\right) d\pi_{\eta, 01} + 2\varepsilon \\
    &= \inf_{x,y \in \bR^d} [\tilde F^\varepsilon(x,y) + I_S(x,y)] + 2\varepsilon.
\end{split}
\end{equation*}
Similar to before, since this is true for any $\varepsilon$, and $\norm{\tilde F_\eta - \tilde F^\varepsilon_\eta}$ can be set arbitrarily small, we have the Laplace lower bound
\begin{equation*}
\begin{split}
    \limsup_{\eta \downarrow 0} -\eta \log \bE \exp -\frac{1}{\eta} F(X^\eta) %
    &= \inf_{x,y \in \bR^d} [\tilde F^\varepsilon(x,y) + I_S(x,y)].
\end{split}
\end{equation*}
where the right-hand side is again given by
\begin{equation*}
    \inf_{\varphi \in \calC_1} F(\varphi) + I_{S}(\varphi_0,\varphi_1) + I_B^{\varphi_0 \varphi_1}(\varphi) = \inf_{\varphi \in \calC_1} F(\varphi) + I_D(\varphi).
\end{equation*}
This completes the proof of the lower bound and identification fo the rate function.

\section{Examples of reference dynamics}
\label{sec:case}
To illustrate the derived Laplace principle, conclude the paper with two explicit examples of reference dynamics that satisfy the assumptions used in our proof: scaled Brownian motion (Section \ref{sec:examples_BM}) and Ornstein-Uhlenbeck processes (Section \ref{sec:examples_OU}). We emphasize that this is by no means an exhaustive list and one area of future work is to consider more examples of reference dynamics.

\subsection{Scaled Brownian motion}
\label{sec:examples_BM}
Arguably the most important case, which was the basis for Schr\"odinger's original question and has been analyzed in \cite{Kato24}, where we can verify our assumptions is when the reference dynamics is that of a scaled Brownian motion $W^\eta = \sqrt{\eta} W$ (i.e., \eqref{eq:R-eta-SDE} with $b = 0$ and $\sigma = 1$).
We have then that the transition density is given by
\begin{equation}
    p_\eta(s, x; t, y) = (2\pi\eta(t-s))^{-d/2} \exp\left(-\frac{|y - x|^2}{2\eta (t-s)}\right)
\end{equation}
and the Doob $h$-transform term is %
\begin{equation}\label{eq:BB-h-transform}
    g_\eta^y(s, x) \coloneq (\sqrt{\eta})^2 \nabla_x \log p_\eta(s, x; 1, y) = -\frac{x - y}{1 - s}.
\end{equation}
Then, the Brownian bridges, $W^{\eta, xy}$, follow the familiar dynamics
\begin{equation}\label{eq:BB-dynamics}
    dW^{\eta, xy}_t = -\frac{W^{\eta, xy}_t - y}{1 - s} dt + \sqrt{\eta} dW_t.
\end{equation}

We now want to establish that assumptions \ref{ass:bounded-lipshitzness-of-log-gradients}, \ref{ass:exp-boundedness}, and \ref{ass:log-gradient-convergence} hold.
Assumption \ref{ass:bounded-lipshitzness-of-log-gradients} is readily verified from \eqref{eq:BB-h-transform}.
Assumption \ref{ass:exp-boundedness} is shown, using \eqref{eq:classical-estimate-d} and classical Gaussian estimates (alternatively, using the goodness of the rate function in Schilder's theorem); we omit the proof for the sake of brevity. %
For Brownian bridges, $g_\eta^y(s, x) = g^y(s, x)$ is independent of $\eta$ and Assumption \ref{ass:log-gradient-convergence} is thus satisfied. 

The $\nu$-controlled limit dynamics of \eqref{eq:BB-dynamics} is
\begin{equation}\label{eq:controlled-BB-limit}
    d\bar{W}^{xy}_t = %
    (b(t, \bar{W}^{xy}_t) + g^y(t, \bar{W}^{xy}_t) + \nu_t) dt = 
    -\frac{\bar{W}^{xy}_t - y}{1 - s} dt + \nu_t dt, \ \ \bar{W}^{xy}_0 = x.
\end{equation}
and the rate function becomes
\begin{equation*}\label{eq:rate-function-BB-xy}
    I_B^{xy}(\varphi) = \inf_{\nu \in U^{xy}_\varphi} \frac{1}{2}\int_0^1|\nu_s|^2ds,
\end{equation*}
where $U^{xy}_\varphi$ are the set of controls driving noise-free controlled dynamics \eqref{eq:controlled-BB-limit} to $\varphi$.

The form \eqref{eq:rate-function-BB-xy} of the rate function is what one typically obtains from the weak convergence approach, due to the reliance on controlled processes. We can also verify that this formulation is 
equal to the one given in \cite{Kato24}, namely,
\begin{equation*}
    I_B^{xy}(\varphi) = \frac{1}{2}\int_0^1 |\varphi'_t|^2 dt - \frac{1}{2}|x - y|^2,
\end{equation*}
where the first term is interpreted as $+\infty$ for any non-absolutely continuous $\varphi \in \calC_1$, and the second is precisely $c$.
This gives the rate function $I_D$ for dynamic Schr\"odinger bridges with scaled Brownian motion as reference dynamics as
\begin{equation}
\begin{split}
    I_D(\varphi) &= (c-(-\psi \oplus \phi))(\varphi_0, \varphi_1) + \frac{1}{2}\int_0^1 |\varphi'_t|^2 dt - \frac{1}{2}|\varphi_0 - \varphi_1|^2 \\
    &= -(-\psi \oplus \phi))(\varphi_0, \varphi_1) + \frac{1}{2}\int_0^1 |\varphi'_t|^2 dt,
\end{split}
\end{equation}
which is precisely the one obtained in \cite{Kato24}.

\subsection{Ornstein-Uhlenbeck processes}
\label{sec:examples_OU}
The Ornstein-Uhlenbeck (OU) process is defined by the dynamics
\begin{equation*}\label{eq:OU-dynamics}
\begin{split}
    dX_t &= -\theta X_t dt + \sqrt{\eta} dW_t.
\end{split}
\end{equation*}
The corresponding transition density is given by 
\begin{equation*}
    p_\eta(s, x; t, y) = \calN\left(y; e^{-\theta (t-s)} x, \eta \frac{1 - e^{-2 \theta (t-s)}}{2 \theta}\right),
\end{equation*}
and the Doob $h$-transform term is
\begin{equation}\label{eq:OU-h-transform}
    g_\eta^y(s, x) \coloneq %
    \frac{2 \theta}{1 - e^{-2 \theta (1-s)}} (y - e^{-\theta (1-s)} x).
\end{equation}
Assumptions \ref{ass:bounded-lipshitzness-of-log-gradients}, \ref{ass:exp-boundedness} %
and \ref{ass:log-gradient-convergence} follow in precisely the same fashion as for the Brownian bridges. Thus, we have that the dynamic Schr\"odinger bridges with an OU-process as reference dynamics satisfy the large deviation principle of Theorem \ref{thm:LDP_dSB}.

\section*{Acknowledgment}
The research of VN and PN was supported by Wallenberg AI, Autonomous Systems and Software Program (WASP) funded by the Knut and Alice Wallenberg Foundation. The research of PN was also supported by the Swedish Research Council (VR-2018-07050, VR-2023-03484).

{
\bibliographystyle{plain}
\hypersetup{hidelinks}
\bibliography{references}

\begin{thebibliography}{10}

\bibitem{baudoin2002conditioned}
Fabrice Baudoin.
\newblock Conditioned stochastic differential equations: theory, examples and application to finance.
\newblock {\em Stochastic Processes and their Applications}, 100(1-2):109--145, 2002.

\bibitem{bernton2022entropic}
Espen Bernton, Promit Ghosal, and Marcel Nutz.
\newblock Entropic optimal transport: Geometry and large deviations.
\newblock {\em Duke Mathematical Journal}, 171(16):3363--3400, 2022.

\bibitem{BD19}
Amarjit Budhiraja and Paul Dupuis.
\newblock {\em Analysis and approximation of rare events}, volume~94 of {\em Probability Theory and Stochastic Modelling}.
\newblock Springer, New York, 2019.
\newblock Representations and weak convergence methods.

\bibitem{chafai2020subgaussian_blog}
Djalil Chafa\"i.
\newblock Sub-gaussian tail bound for local martingales.
\newblock \url{https://djalil.chafai.net/blog/2020/11/25/sub-gaussian-tail-bound-for-local-martingales/}, 2020.
\newblock Blog post.

\bibitem{Dembo98}
Amir Dembo and Ofer Zeitouni.
\newblock {\em Large deviations techniques and applications}.
\newblock Stochastic Modelling and Applied Probability. Springer, New York, NY, second edition, 1998.

\bibitem{dung2023some}
Nguyen~Tien Dung.
\newblock Some new concentration inequalities for the it$\backslash$\^{} o stochastic integral.
\newblock {\em arXiv preprint arXiv:2310.18699}, 2023.

\bibitem{Dupuis97}
Paul Dupuis and Richard~S. Ellis.
\newblock {\em A weak convergence approach to the theory of large deviations}.
\newblock Wiley series in probability and statistics. John Wiley \& Sons, Inc., New York, NY, first edition, 1997.

\bibitem{haussmann1986time}
Ulrich~G Haussmann and Etienne Pardoux.
\newblock Time reversal of diffusions.
\newblock {\em The Annals of Probability}, pages 1188--1205, 1986.

\bibitem{Kato24}
Kengo Kato.
\newblock Large deviations for dynamical schr\"odinger problems.
\newblock {\em arXiv preprint arXiv:2402.05100}, 2024.

\bibitem{leonard2012schrodinger}
Christian L{\'e}onard.
\newblock From the schr{\"o}dinger problem to the monge--kantorovich problem.
\newblock {\em Journal of Functional Analysis}, 262(4):1879--1920, 2012.

\bibitem{menozzi2021density}
St{\'e}phane Menozzi, Antonello Pesce, and Xicheng Zhang.
\newblock Density and gradient estimates for non degenerate brownian sdes with unbounded measurable drift.
\newblock {\em Journal of Differential Equations}, 272:330--369, 2021.

\bibitem{mikami2004monge}
Toshio Mikami.
\newblock Monge’s problem with a quadratic cost by the zero-noise limit of h-path processes.
\newblock {\em Probability theory and related fields}, 129(2):245--260, 2004.

\bibitem{nilsson2025large}
Viktor Nilsson and Pierre Nyquist.
\newblock Large deviations for scaled families of schr\"odinger bridges with reflection.
\newblock {\em arXiv preprint arXiv:2506.03999}, 2025.

\bibitem{nutz2021introduction}
Marcel Nutz.
\newblock Introduction to entropic optimal transport.
\newblock {\em Lecture notes, Columbia University}, 2021.

\bibitem{peyre2019}
Gabriel Peyr{\'e}, Marco Cuturi, et~al.
\newblock Computational optimal transport: With applications to data science.
\newblock {\em Foundations and Trends{\textregistered} in Machine Learning}, 11(5-6):355--607, 2019.

\bibitem{rogers2000diffusions}
L~Chris~G Rogers and David Williams.
\newblock {\em Diffusions, Markov processes, and martingales}, volume~2.
\newblock Cambridge university press, 2000.

\bibitem{Vil09}
C\'{e}dric Villani.
\newblock {\em Optimal transport}, volume 338 of {\em Grundlehren der mathematischen Wissenschaften [Fundamental Principles of Mathematical Sciences]}.
\newblock Springer-Verlag, Berlin, 2009.
\newblock Old and new.

\end{thebibliography}
}

\appendix

\end{document}